\title{Computability Aspects of Differential Games in Euclidian Spaces} 
\author{Gafurjan Ibragimov}{Universiti Putra Malaysia, Malaysia}{ibragimov@upm.edu.my}{https://orcid.org/0000-0002-4282-7482}{}
\author{Bakh Khoussainov}{University of Auckland, New Zealand \and \url{https://www.cs.auckland.ac.nz/~bmk/} }{bmk@cs.auckland.ac.nz}{}{}
\author{Arno Pauly}{Swansea University, UK \and \url{http://www.cs.swan.ac.uk/~cspauly/}}{arno.m.pauly@gmail.com}{https://orcid.org/0000-0002-0173-3295}{}
\authorrunning{B.~Khoussainov, G.~Ibragimov and A~Pauly}
\keywords{differential games, lion versus man, pursuit and evasion games, computable analysis}
\newtheorem*{lemma*}{Lemma}
\newtheorem*{theorem*}{Theorem}
\newtheorem*{question}{Question}
\begin{document}

\maketitle

\begin{abstract}
We study computability-theoretic aspects of differential games.
Our focus is on pursuit and evasion games played in Euclidean spaces in the tradition of Rado's \emph{Lion versus Man} game. In some ways, these games can be viewed as continuous versions of reachability games. We prove basic undecidability
of differential games, and study natural classes of pursuit-evasion games in Euclidean spaces where the winners can win via computable strategies. The winning strategy for Man found by Besicovitch for the traditional \emph{Lion versus Man} is not computable. We show how to modify it to yield a computable non-deterministic winning strategy, and raise the question whether Man can win in a computable and deterministic way.
\end{abstract}

\section{Introduction}

\vspace{-3mm}
\subsection{Background}
\vspace{-1mm}

In control theory the state of a dynamical system is given by a variable $x\in \mathbb R^n$ that evolves over time. The state
$x$  is described by an ordinary differential equation $\dot{x}(t)=f(t, x(t), u(t))$, where $t$ is a time parameter ranging in the interval $[0,T]$. The mapping $t\rightarrow u(t)$
is a control function. The goal is to find $u(t)$ from a class $C$ of control functions
that maximises

$$
\psi(x(T))-\int_0^T L(x(t), u(t), t) dt
$$
given initial condition $x(0)=x_0$, the running costs $L$ and the final payoff $\psi$. Differential games extend this model, where two (or more) players Player 0 and Player 1 play with competing goals.  The state of the system is still described by a differential equation $\dot{x}(t)=f(t, x(t), y(t), u_0(t), u_1(t))$, where $t\in [0,T]$ and $u_0(t)$, $u_1(t)$ are control functions for the players from function spaces $C_0$ and $C_1$. Given the initial condition, $\psi_i$ and $L_i$ as above, the goal of Player $i$, $i=0,1$, is to maximise
$$
\psi_i(x(T))-\int_0^T L_i(x(t), y(t), u_0(t), u_1(t), t) dt.
$$
Since both players can have an impact on the overall state of the system, one needs to specify what information is available to them, i.e.~what the functions $u_i$ can depend on besides the time. If no further information is available, we are dealing with \emph{open loop} strategies. If $u_i$ depends on $t$ and $x(t)$, we shall speak of \emph{positional} strategies (borrowing terminology from the area of sequential games played on finite graphs). Full information strategies would have $u_i(t)$ depend on $t$ and the entirety of $u_{0}(t')$ and $u_1(t')$ for $t' < t$.

R. Isaacs was the first who initiated the study of differential games \cite{isaacs}.
Pontryagin  continued the development of the theory of differential games in \cite{pontryagin2}.  These works
were motivated by possible
applications of pursuit-evasion games in missile guidance systems. Later a series of
research monographs in the area appeared, e.g. \cite{afriedman} \cite{subbotin}. Nowadays, the theory of differential games is an attractive topic of research ranging from control theory, economics, and geometry to robotics and optimisation.  See for instance, \cite{aubin}
 \cite{basar} \cite{dockner}.  The idea of pursuit and evasion  is a cornerstone of the theory of differential games.
 In this paper we focus on pursuit-evasion games played in Euclidean spaces and investigate computability-theoretic aspects of the games.
 For recent monographs we refer the reader to \cite{pachter} \cite{tsokos}.


 An informal set-up for the pursuit-evasion games is this.
Let $D$ be a domain in a Euclidean space. There are two players Pursuer and Evader.
Both players possess sets of control functions $C_P$ and $C_E$, respectively. We often call
control functions strategies. The players
use their control functions given that both move inside $D$.
The goal of Pursuer is to reach a vicinity of Evader, while the goal of Evader
is to stay away from Pursuer. More formally, Pursuer's  task is to find a control function $u \in C_P$ such that if Pursuer follows $u$ then Pusruer can reach a vicinity of Evader independent on control strategies from $C_E$ used by Evader. Similarly, Evader's task is to find a control function $v\in C_E$ such that if Evader follows $v$ then Pursuer will never be able to reach the vicinity of Evader independent on control strategies used by Pursuer.
This is a general set-up and many concepts, such as the concept of vicinity, conditions on control functions and domains
should be refined.

A classical example of pursuit and evasion game is the famous Man and Lion problem of R. Rado \cite{rado2}. R. Rado stated the problem in 1925 (see \cite{littlewood}).
A lion (Pursuer) and a man (Evader),  each viewed as a single point in a closed disc, have equal maximum speeds.
Can the lion catch the man if both move with equal speed and stay in the disk?
This had  been a well known problem since it was stated.
 S. Besicovitch  in 1952, after almost 30 years of the formulation of the problem and several wrong proofs (by other researchers),
  showed that the man can forever avoid from  being captured (see \cite{littlewood}). Croft \cite{croft} also studied the original problem in different settings.  Croft  proved that if the man runs along the curve of the disk, then the
 lion can capture the man. Furthermore,  Croft showed that in the $n$-dimensional Euclidean ball
 $n$ lions can catch the man while the man can escape from $n -1$ lions.  Later J. Flynn \cite{flynn} investigated the Man and Lion problem under various constrains, such as
the  lion has  a higher speed than the man, and gave some quantitative bounds on the strategies. Ivanov \cite{ivanov} studied the Man and Lion problem in compact spaces. More recent accounts of the problem in various settings are in \cite{alonso}, \cite{kopparty}, \cite{rote}, \cite{satimov},  \cite{sgall}, \cite{ibragimov}. For instance, Sgall in \cite{sgall} studies a discrete version of this problem in the positive quadrant of the plain and provides  an algorithm that extracts a winning strategy. S. Alexander, R. Bishop, and R.  Ghrist recently studied the Man and Lion type of problems on unbounded convex Euclidean domains \cite{alexander}.  Another interesting study is the work of K. Klein and S. Suri \cite{suri};
they investigate the question about the number of pursuers needed to capture an evader in  polygonal environments with
obstacles. We  mention Petrosyan \cite{petrosyan}, \cite{petrosyan2} who introduced parallel approach strategies and applied them to solve various pursuit problems in continuous setting. We too in this paper use parallel approach strategies, among several others such as   S. Besicovitch  type of strategies that we mentioned above.

There is a vast amount of research on decision-theoretic aspects of pursuit-evasion games played on graphs. Initial papers are by T.D. Parsons \cite{parsons}, \cite{parsons2}.   These works emphasise the problem of computing
 the number of pursuers needed to catch an evader. An interesting work devoted to the study of randomised strategies in pursuit-evasion games played on graphs is \cite{adler,adlerb}, where  the authors study the length of escape by Evader under various conditions put on the players and their randomised strategies. The paper \cite{fomin}
 investigates computational aspects (e.e. parametrized complexity)
 of pursuit-evasion games on graphs, and lists a comprehensive reference to  the topic.

\subsection{Contributions}
We start the investigation of computability-theoretic and complexity-theoretic aspects of differential games played in the finite-dimensional spaces $\mathbb R^n$.
We study pursuit-evasion games and focus on the following.

\noindent
{\bf 1}.
 In Section \ref{S:Undecidability}   we show that no algorithm exists that,
 given a differential game, decides the winner of the game (see Theorem \ref{Thm:undeicdable}).
The undecidability phenomenon already occurs in dimension $1$.
 Although undecidability  is expected in view of undecidability of differential equations
 and undecidability of reachability problem in dynamical systems
 \cite{hainry} \cite{denef},
 our result motivates the question about finding computable winning strategies
 in differential games.

\noindent
 {\bf 2}. In Section \ref{S:ComputableStrategies} we study existence of computable winning strategies in classical pursuit and evasion games. First, we consider the Pursuer and Evader  games in the whole space  $\mathbb R^n$ and in a half-space. In both cases, we show that
 the winners have winning strategies in cases when the spaces of control strategies $C_P$ and $C_E$ are universally bounded (see Proposition \ref{prop-space} and Theorem \ref{thm-half-space}). Second, we consider Pursuer and Evader games in
a convex compact set in the $n$-dimensional space
 $\mathbb R^n$ where
the  Pursuer's objective is to be within $\epsilon$-distance from Evader, where $\epsilon>0$ and the players have equal speeds.
Using Petrosyan's parallel pursuit strategies  \cite{petrosyan}, \cite{petrosyan2}, in Theorem \ref{epsilon-catch} we prove that Pursuer has a winning strategy computable almost everywhere. Moreover, all trajectories consistent with the winning strategy are computable. The proof uses a lemma showing that the projection operator on computable compact and convex sets is computable. In contrast, when $\epsilon=0$,
in $n$-dimensional compact convex sets, $n-1$ Pursuers cannot capture  Evader \cite{croft}, \cite{ivanov}.  The original proofs of these results (there are several of them
as we already mentioned) have no regards to computability, and one needs to give non-trivial arguments from computable analysis in order to obtain the results of this section. For instance, to our knowledge our proof of Theorem \ref{epsilon-catch} that uses projection operation is new.

\noindent
{\bf 3}.  In Section \ref{sec:L&MinDisk}, we focus on the classical Man and Lion problem in the disk, where both players have equal speeds. The goal of Pursuer is to achieve an exact catch. As we mentioned above Besicovitch proved that Evader has a winning control function in this game (see \cite{littlewood}). However, the winning strategy  constructed by Besicovitch is not computable because  the problem if a given point in $\mathbb R^2$ belongs to a line  is
 undecidable. In the proof of Theorem \ref{Thm:Algorithm} we recast  Besicovitch strategy, adapt it, and  provide a positive result
 by showing  that there exists a computable non-deterministic strategy such that Evader wins every play  consistent with the strategy. Our adaptation of Besicovitch strategy is not so obvious, and one needs to carefully reason and argue that the strategy obtained is computable (and non-deterministic).
We don't claim the existence of a computable deterministic strategy since our non-deterministic strategy is dependent on enumerations of input variables and different enumerations might give different outputs.


\subsection{Computable functions on $\mathbb R^n$}

We need to borrow some basic definitions from computable analysis \cite{weihrauchd} and computability \cite{soare2}. We use $\omega$ and $\mathbb Q$ to denote the set of natural numbers and rational numbers respectively. The Euclidean distance between $x, y\in \mathbb R^n$ is denoted by $|x-y|$.


Let $x$ be a point in the space $\mathbb R^n$. We start with the following definitions.
\begin{definition}\label{Dfn:Reps}
{\em A function $\phi: \omega \rightarrow  \mathbb Q^n$  {\em represents } the $x$ if $|x-\phi(n)|\leq 1/2^n$ for all
$n\in \omega$.}
\end{definition}

\noindent
Thus, a  presentation $\phi$ of point $x$ can provide arbitrary good approximation to $x$.
Clearly each point $x$ has infinitely many representations.

\begin{definition}
{\em The point $x$ is {\em computable} if there is a computable function $\phi: \omega \rightarrow  \mathbb Q^n$  representing $x$.}
\end{definition}
\vspace{-1mm}
\noindent
Since we seek strategies to be computable functions on reals or the space $\mathbb R^n$, we need to remind the reader the definition of a computable function:
\vspace{-1mm}
\begin{definition}
{\em A function $f:\mathbb X \rightarrow \mathbb R^m$, where $\mathbb X\subseteq \mathbb R^n$, is {\em computable} if there exists an oracle Turing machine $M$ such that for all points $x\in \mathbb X$, whenever the oracle tape of $M$ is fed with a representation $\phi$ of the  point $x$,  the machine $M$ outputs $y_n\in \mathbb Q^m$ such that
$|f(x)-y_n|< 1/2^n$ for all $n \in \omega$.}
\end{definition}

On the one hand, the most commonly used functions such as polynomials with computable coefficients,  and functions
$sin(x)$, $cos(x)$, $e^x$ are all computable. All computable functions are continuous \cite{weihrauchd}. Thus simple functions such as the step functions are not computable because they are discontinuous.


We will also use compact and convex sets. Among these sets we single out computable ones.
\begin{definition}{\em We call non-empty closed convex set $S\subseteq \mathbb R^n$ {\em computable}\footnote{Note that this is a non-standard terminology, which nevertheless serves well for our purposes.}
if the set of all rational points in $S$ is a decidable set and dense in $S$. }
\end{definition}

On computability for higher-type objects such as sets and function spaces, we refer to \cite{pauly-synthetic}. Non-computability in analysis is studied in the framework of Weihrauch degrees, see \cite{pauly-handbook} for a survey.


\section{Undecidability phenomenon}\label{S:Undecidability}


We start with 
one-dimensional differential games and
show that no algorithm exists that finds the winner in such games. \  On the real line $\mathbb R$ consider two players {\em Pursuer} and {\em Evader}.
Their initial positions are $x_0$ and $y_0$, and at time $t$ their positions are $x(t)$ and $y(t)$, respectively. The players move along the real line $\mathbb R$.  We assume that the positions satisfy the following ordinary differential equations with initial conditions:
\begin{center}{
\vspace{-1mm}
 $\dot{x}=u(t)$,  $x(0)=x_0$,\\
$ \dot{y}=v(t)$,  $ y(0)=y_0$,  $0<x_0<y_0$.}
\end{center}

The functions $u$ and $v$ are {\em control functions} taken over function spaces $C_P$ and $C_E$, respectively.
The positions $x(t)$ and $y(t)$ are evaluated as follows:
$$
x(t)=x_0+\int_0^t u(s) ds \ \ \ \mbox{and} \ \ \ y(t)=y_0+\int_0^t v(s) ds.
$$
In these equations Pursuer  follows the control function $u$ and Evader follows the control function $v$. 
 We would like to know if Pursuer catches  Evader, that is, if if there exists a time $t_c$ such that $x(t_c)=y(t_c)$.
A formal definition of the winner is this:
\begin{definition}{\em
{\em Pursuer wins  (or completes pursuit)} if there exists a strategy  $u \in C_P$ such that for all $v \in C_E$ there is a time $t_c$ that depends on $v$ such that $x(t_c)= y(t_c)$. {\em Evader wins (or evasion is possible)} if there exists a strategy $v \in C_E$ such that for all $u \in C_P$ we have $x(t) \neq y(t)$ for all $t$.}
\end{definition}
We note that $x(t)$ and $y(t)$ in the definition above both could depend on the strategies $u$ and $v$, if we either allow all strategies or at least positional ones.
An easy characterisation result  is the following.

\begin{claim} \label{Thm:1-dim}
Assume that Pursuer and Evader follow $u\in C_P$ and $v\in C_E$, respectively, with the initial conditions as above. Then Pursuer catches Evader if and only if there exists a time $t$ such that
$$
y_0-x_0 \leq \int_0^t (u-v) ds \ .\qed
$$
\end{claim}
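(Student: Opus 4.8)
The plan is to reduce the catch condition to a statement about the single scalar function $g(t) := \int_0^t (u(s)-v(s))\,ds$ and then invoke the intermediate value theorem. Throughout, $u$ and $v$ are regarded as the time-resolved controls realised along the play, so that $x(t)$ and $y(t)$ are determined functions of time. First I would note that, by the integral formulas for $x(t)$ and $y(t)$ given above, the equality $x(t_c)=y(t_c)$ is equivalent to
$$ \int_0^{t_c} (u(s)-v(s))\,ds \;=\; y_0-x_0, $$
that is, to $g(t_c)=y_0-x_0$. Hence Pursuer catches Evader precisely when the function $g$ attains the value $y_0-x_0$.

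The forward implication is then immediate: if $g(t_c)=y_0-x_0$ for some $t_c$, then in particular $y_0-x_0 \le g(t_c)$, so $t=t_c$ witnesses the asserted inequality.

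For the converse, assume $y_0-x_0 \le g(t)$ for some time $t$. Since the control functions in $C_P$ and $C_E$ are (bounded) measurable, $g$ is absolutely continuous, hence continuous, on $[0,t]$. Moreover $g(0)=0$, and $0<y_0-x_0$ by the standing assumption $x_0<y_0$; therefore $g(0) < y_0-x_0 \le g(t)$. The intermediate value theorem yields a time $t_c\in[0,t]$ with $g(t_c)=y_0-x_0$, which by the first step means $x(t_c)=y(t_c)$, i.e.\ Pursuer catches Evader.

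There is no genuine obstacle in this argument; the only point deserving a word of justification is the continuity of $g$, which rests on the integrability of the control functions. Conceptually the claim merely records the fact that in one dimension a pursuer who ever gets (weakly) ahead of the evader must have been level with it at some earlier instant.
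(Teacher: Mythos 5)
Your proof is correct and is exactly the argument the paper leaves implicit (the claim is stated with an immediate \qed and no written proof): reduce the catch condition to $g(t_c)=y_0-x_0$ for $g(t)=\int_0^t(u-v)\,ds$, and get the converse from $g(0)=0<y_0-x_0$ together with continuity of $g$ and the intermediate value theorem. Nothing further is needed.
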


Thus, we see that for neither player having access to information beyond the initial position is  relevant: Evader wants to move away from Pursuer as quickly as possible, and there is only one possible direction for that; dually, Pursuer wants to move in the direction of Evader, and Evader will always be in the same direction unless already caught. We can thus restrict to open-loop strategies in the one-dimensional case.

Note that if $C_E$ and $C_P$ are the sets of all computable functions on $\mathbb R$ then the  game has no winner. Thus, the existence of the winner depends on $C_E$ and $C_P$. Using Claim \ref{Thm:1-dim}, we have the following:

\begin{lemma}
\label{lemma:aux:function}
From a sequence $(a_n \in [0,1])_{n \in \mathbb{N}}$ we can compute a smooth\footnote{Recall that computing a smooth function $f$ means being able to evaluate $f^{(k)}(x)$ uniformly in $x$ and $k$.} function $f : \mathbb{R} \to \mathbb{R}$ such that
\begin{enumerate}
\item $f(n) = n - a_n$
\item The local maxima of $f$ are attained exactly on $\mathbb{N}$
\item $f' : \mathbb{R} \to [-1,5]$
\end{enumerate}
\begin{proof}
We first introduce some auxiliary functions. Let $h(x) = 0$ for $x \leq 0$ and $h(x) = e^{-\frac{1}{x}}$ for $x > 0$. We then define the usual smooth transition function $s$ as $s(x) = \frac{h(x)}{h(x) + h(1-x)}$. The function $s$ satisfies $s(x) = 0$ for $x \leq 0$, $s(x) = 1$ for $x \geq 1$, $s(x) \in (0,1)$ and $s'(x) \in (0,2)$ for $x \in (0,1)$. Let $b(x) = 0.5e^{\frac{-1}{1-4x^2}}$ for $x \in (-0.5,5)$ and $b(x) = 0$ else. The smooth function $b$ takes its unique maximum of $0.5$ at $0$, and satisfies $b'(x) \in [-1,1]$.

Now the function $f$ is defined as $f(x) = -0.5 + \sum_{n \in \mathbb{N}} b(x - n) + (1 - a_{n+1} + a_n)s(x - n)$. By construction of $s$ and $b$, only the terms corresponding to $\{n \in \mathbb{N} \mid n \leq x\}$ contribute non-zero terms to the sum, which ensures that $f$ is well-defined and smooth, and moreover, that $f$ inherits computability from $b$, $s$ and $(a_n)_{n \in \mathbb{N}}$.

As $s$ has no strict extrema, the local maxima are provided by the auxiliary function $b$ and are subsequently located exactly at $n \in \mathbb{N}$. When computing the derivative of $f$, we see that at most one term of the sum contributes. The contribution of $b$ is between $-1$ and $1$, and the contribution of the $s$-term is between $0$ and $4$ (since it is rescaled by a factor of up to $2$).
\end{proof}
\end{lemma}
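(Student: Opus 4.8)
The plan is to construct $f$ as a superposition of two families of standard smooth building blocks, each localized around an integer: a compactly supported \emph{bump} placed at each $n$ to create a strict local maximum there, and a monotone \emph{transition} that carries the graph from the prescribed value $f(n)=n-a_n$ up to the prescribed value $f(n+1)=(n+1)-a_{n+1}$. The observation that makes a monotone transition adequate is that the required net increment is $\big((n+1)-a_{n+1}\big)-\big(n-a_n\big)=1-a_{n+1}+a_n$, which always lies in $[0,2]$; in particular it is nonnegative, so no decreasing piece is ever needed and monotone pieces contribute no new local maxima.

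Concretely, I would first fix two computable smooth functions, independent of the sequence. A transition $s$ with $s\equiv 0$ on $(-\infty,0]$, $s\equiv 1$ on $[1,\infty)$, strictly increasing on $(0,1)$, and with $0\le s'\le 2$ everywhere; and a nonnegative bump $b$ supported in a small symmetric interval around $0$, with a unique maximum at $0$ and $|b'|\le 1$. Both can be obtained from the usual mollifier $h(x)=e^{-1/x}$ ($x>0$), so both are computable. Then set
$$ f(x)=c+\sum_{n\in\mathbb N}\Big(b(x-n)+(1-a_{n+1}+a_n)\,s(x-n)\Big), $$
with the additive constant $c$ chosen (computably from $(a_n)$) so that the base point comes out correctly.

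Next I would check the three properties. Well-definedness, smoothness and computability follow because the summands have locally finite supports, so near any point only finitely many terms are nonzero, and $f$ inherits computability from $b$, $s$ and $(a_n)$, uniformly in the input sequence. For property (1): at $x=n$ only the bump centred at $n$ contributes, with value $b(0)$, while the transition terms contribute $\sum_{m<n}(1-a_{m+1}+a_m)$; this sum telescopes, and with $c$ chosen appropriately one gets $f(n)=n-a_n$ for all $n$. For property (3): at any point at most one bump term and at most one transition term have nonzero derivative, so $f'(x)=b'(x-m)+(1-a_{m+1}+a_m)s'(x-m)$ for the relevant $m$; since $|b'|\le 1$, $s'\in[0,2]$ and the coefficient lies in $[0,2]$, this forces $f'(x)\in[-1,5]$.

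The part requiring real care is property (2), that the local maxima occur exactly on $\mathbb N$. Away from the bump supports, $f$ is $c$ plus a sum of monotone transitions, hence nondecreasing, so it has no local maximum there; since the coefficients are only nonnegative, one should also make sure the transition is genuinely increasing where needed, to rule out a spurious ``flat'' maximum on a locally constant stretch (the degenerate case $1-a_{n+1}+a_n=0$ is easily handled). Near an integer $n$, $f$ equals the bump $b(x-n)$ plus a nondecreasing function, and one wants the bump to dominate the local shape: $f$ strictly increasing just left of $n$ and strictly decreasing just right of it. This is the main obstacle, because when the bump and transition supports overlap one must control the sign of $b'(x-n)+(1-a_{n+1}+a_n)s'(x-n)$ throughout the overlap. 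It works out because $s'$ vanishes to infinite order at the endpoints of its transition interval whereas $b'$ vanishes only linearly at the peak of the bump, so a one-sided neighbourhood of $n$ is governed by $b'$; alternatively, and more cleanly, one can take the bump supports small enough and reparametrise $s$ so that the transition happens strictly between consecutive bump supports, which preserves the derivative bound and makes $f'$ governed by a single building block at every point, whence property (2) is immediate.
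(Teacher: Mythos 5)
Your construction is essentially identical to the paper's: the same smooth transition $s$ and compactly supported bump $b$ built from $e^{-1/x}$, the same superposition $f(x)=c+\sum_n\bigl(b(x-n)+(1-a_{n+1}+a_n)s(x-n)\bigr)$, and the same telescoping and one-bump-plus-one-transition-per-point arguments for properties (1) and (3). If anything you are a little more careful than the paper on the choice of the additive constant and on the degenerate flat stretch when $1-a_{n+1}+a_n=0$, and your order-of-vanishing argument for property (2) in the overlap region correctly fills in what the paper leaves implicit.
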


\begin{theorem}\label{Thm:undeicdable}
Consider one-dimensional pursuit-evasion games. For any pair of computable sequences $(u_n)_{n \in \mathbb{N}}$, $(v_n)_{n \in \mathbb{N}}$ of strategies for Pursuer and Evader respectively, the set $E$ of those indices $n$ where Evader wins playing $v_n$ against Pursuer playing $u_n$ is $\Pi^0_2$, and there is a computable pair where $E$ is $\Pi^0_2$-complete.
\begin{proof}
That $E$ is always $\Pi^0_2$ follows from Claim \ref{Thm:1-dim}, taking into account that integration is a computable operation. To show completeness, we start with a Turing machine $\Phi$ and construct a computable function $u_\Phi$ such that Pursuer using $u_\Phi$ can catch an Evader that the constant function $v_c = 1$ iff $\Phi$ computes a total function. From that, we can construct the desired pair of sequences as $(u_c)_{n \in \mathbb{N}}$, $(v_{\Phi_n})_{n \in \mathbb{N}}$ using some standard enumeration $(\Phi_n)_{n \in \mathbb{N}}$ of Turing machines.

For $k \in \omega$, let $a_{k}$ be $2^{-s_k}$ if $\Phi(k)$ halts exactly at stage $s_k$, and let $a_{k}$ be $0$ if $\Phi(k)$ never halts. Note that the real number $a_k$ is computable from $k$ and $\Phi$. Let $f$ be a function computed from the sequence $(a_k)_{k \in \mathbb{N}}$ according to Lemma \ref{lemma:aux:function}. We want $f$ to be the trajectory of Pursuer. For that, we just set $u_\Phi = f'$, and let Pursuer start at $-2^{s_0}$ and Evader start at $0$. Since the local extrema of $f$ are attained on $\mathbb{N}$, we see that if pursuit is completed, it must hold that $f(k) \geq k$ for some $k \in \mathbb{N}$. Since $f(k) = k - a_k$, this happens iff $a_k = 0$, which in turn is equivalent to $\Phi(k)$ being undefined.
\end{proof}

\end{theorem}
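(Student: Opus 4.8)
The plan is to prove the theorem in two halves, corresponding to the upper bound and the completeness.

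For the upper bound, I would invoke Claim~\ref{Thm:1-dim}: Evader wins playing $v_n$ against Pursuer playing $u_n$ precisely when there is \emph{no} time $t$ with $y_0 - x_0 \leq \int_0^t (u_n - v_n)\, ds$. Writing $g_n(t) = \int_0^t (u_n - v_n)\, ds$, this says $g_n(t) < y_0 - x_0$ for all $t \geq 0$. Since integration of a computable function is computable (uniformly), the family $(g_n)_{n}$ is computable, and ``$g_n(t) < c$ for all $t$'' is a $\Pi^0_1$ condition \emph{relative to a fixed bound on $t$}; quantifying over all rational times $t$ and all precisions gives a $\Pi^0_2$ description of $E$. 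I would remark that it suffices to range $t$ over rationals by continuity of $g_n$, so the definition is $\forall t \in \mathbb{Q}\ \forall k\ (g_n(t) < y_0 - x_0 - 2^{-k} \vee \ldots)$ collapsed appropriately to $\Pi^0_2$ form.

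For $\Pi^0_2$-completeness I would reduce the (well-known $\Pi^0_2$-complete) set $\mathrm{Tot} = \{\Phi : \Phi \text{ is total}\}$ to $E$. Given a Turing machine $\Phi$, define $a_k = 2^{-s_k}$ if $\Phi(k)$ halts at stage $s_k$ and $a_k = 0$ otherwise; each $a_k$ is computable from $k$ and $\Phi$ (one runs $\Phi(k)$ and outputs $2^{-s}$ as soon as it halts, else keeps outputting $0$ at ever-better precision — this is exactly a representation in the sense of Definition~\ref{Dfn:Reps}). Feed $(a_k)_k$ into Lemma~\ref{lemma:aux:function} to get a smooth computable $f$ with $f(n) = n - a_n$, local maxima exactly on $\mathbb{N}$, and $f' : \mathbb{R} \to [-1,5]$. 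Set $u_\Phi = f'$, which is a legitimate computable control function, let Pursuer start at $f(0) = -a_0 = -2^{-s_0}$ (or simply at $f(0)$) with trajectory exactly $f$, and let Evader play the constant strategy $v_c \equiv 1$ starting at $0$, so Evader's trajectory is $y(t) = t$. Then by Claim~\ref{Thm:1-dim}, Pursuer catches Evader iff $f(t) \geq t$ for some $t \geq 0$; since the only local maxima of $f$ lie on $\mathbb{N}$ and $f(n) = n - a_n \leq n$ with equality iff $a_n = 0$, this happens iff $a_n = 0$ for some $n$, i.e.\ iff some $\Phi(n)$ diverges, i.e.\ iff $\Phi \notin \mathrm{Tot}$. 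Hence Evader wins iff $\Phi \in \mathrm{Tot}$. Finally, using a standard enumeration $(\Phi_n)_n$ of Turing machines, the pair of computable sequences $(u_c)_{n}$ (constant in $n$, or rather we take the diagonal pair $(u_{\Phi_n})_n$, $(v_c)_n$) witnesses that $E$ is $\Pi^0_2$-complete.

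The main obstacle is getting the endpoints of the chase to line up so that ``catch'' is equivalent to the clean arithmetical condition $\exists n\, (a_n = 0)$: one must check that $f(t) < t$ for all $t$ that are \emph{not} natural numbers regardless of the $a_n$'s (this is where the ``local maxima exactly on $\mathbb{N}$'' clause of Lemma~\ref{lemma:aux:function} does the work, combined with the fact that between consecutive integers $f$ cannot overshoot because $f' \le 5$ is irrelevant here — rather it is the shape of $b$ and $s$ that matters), and that the initial offset $y_0 - x_0 = 0 - (-2^{-s_0}) = 2^{-s_0} = a_0$ does not accidentally make the first catch trivial. A careful statement is that Pursuer catches Evader iff $\exists t\ f(t) \ge t$, and since $\sup\{f(t) - t : t \in [n, n+1]\}$ is attained at $t = n$ with value $-a_n$, the supremum over all $t \ge 0$ is $0$ if some $a_n = 0$ and is negative (indeed $= -\inf_n a_n \le 0$, but never $0$) otherwise; so the condition is exactly $\exists n\, a_n = 0$. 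This bookkeeping, together with verifying that all objects produced (the sequence $(a_k)_k$, the function $f$, hence $u_\Phi$) are computable uniformly in $\Phi$, is the only real content beyond citing Claim~\ref{Thm:1-dim} and Lemma~\ref{lemma:aux:function}.
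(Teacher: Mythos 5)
Your proposal is correct and follows essentially the same route as the paper: the $\Pi^0_2$ upper bound via Claim~\ref{Thm:1-dim} plus computability of integration, and $\Pi^0_2$-completeness by reducing $\mathrm{Tot}$ through the sequence $(a_k)$ and Lemma~\ref{lemma:aux:function}, with Pursuer's trajectory $f$ chasing the unit-speed Evader. You in fact state the reduction more carefully than the paper does (correctly assigning the machine-indexed strategies to Pursuer and the constant strategy to Evader, and getting the direction ``Evader wins iff $\Phi$ is total'' right), and the bookkeeping step you flag --- that $\sup_{t\in[n,n+1]}(f(t)-t)$ is controlled by the values at the integers --- is exactly the point where the paper, too, leans on the ``local maxima exactly on $\mathbb{N}$'' clause of Lemma~\ref{lemma:aux:function}.
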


\section{\hspace{-3mm} Extracting winning strategies}\label{S:ComputableStrategies}

\noindent

Given the undecidability Theorem \ref{Thm:undeicdable}, one way to study algorithmic aspects of differential games is to investigate if the winners have computable winning strategies. 
Our goal  is to investigate the problem of extracting computable winning strategies  in the classical Pursuer  and Evader
types of games.




We provide two types of results that are based on two possible definitions to the concept of vicinity. First, we consider exact catches in unbounded arenas such the full space and half-spaces. Here, we show that if the parameters are all computable, then either Pursuer can win using a computable positional strategy, or Evader can win using a computable open-loop strategy.
On the other hand, we consider compact, convex and computable sets $S$
in $\mathbb R^n$
and consider pursuit-evasion games in $S$ where the goal of Pursuer is to be within $\epsilon$-distance from Evader, where $\epsilon >0$. We show that in these games Pursuer has a positional winning strategy computable almost everywhere.

\subsection{\hspace{-4mm} Games played on $\mathbb R^n$ and half-spaces}

In this section we assume that the goal of Pursuer is exact catch, meaning that the position of Pursuer coincides with the position of Evader. Assume that the dynamics of Pursuer and Evader are given as: 
\begin{eqnarray}
\dot{x}=u, \  x(0)=x_0,  \
\dot{y}=v, \  y(0)=y_0, \    x_0\neq y_0. \label{1}
\end{eqnarray}
As mentioned if $C_E$ and $C_P$ consists of all computable functions, then there is no winner. We consider $C_E$ and $C_P$ consisting  of computable functions:
\begin{eqnarray}
C_P=\{u\mid |u| \leq a\} \ \mbox{and} \ C_E=\{v \mid |v|\leq b\},  \label{2}
\end{eqnarray}
where $a$ and  $b$ are computable reals. We assume that the players move in $\mathbb R^n$. Let us denote the game defined by $\Gamma(a,b, \mathbb R^n)$. As expected, we have:
\begin{proposition}\label{prop-space}
Consider the game $\Gamma(a,b, \mathbb R^n)$. If $a\leq b$ then Evader has an open loop computable winning strategy. If $a>b$ then Pursuer has a positional computable winning strategy.
\end{proposition}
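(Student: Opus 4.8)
The plan is to use the two obvious geometric strategies and to check, in each case, both that the strategy wins and that it is computable in the required sense; the computability verification is where a little care is needed.

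\textbf{Case $a \le b$.} First I would have Evader fix once and for all the unit vector $d = (y_0 - x_0)/|y_0 - x_0|$ pointing directly away from Pursuer's starting position, and play the constant, hence open-loop, control $v^{*}(t) = b\,d$. This is a computable control function: since $x_0 \neq y_0$, the quantity $|y_0 - x_0|$ is a strictly positive computable real, so its reciprocal is computable, and as $b$, $x_0$, $y_0$ are computable the vector $b\,d$ is computable. To see that Evader wins, project onto the axis spanned by $d$ via $\pi(z) = \langle z - x_0, d\rangle$. Then $\pi(y(t)) = |y_0 - x_0| + bt$, whereas for any admissible $u \in C_P$ we have $\pi(x(t)) = \int_0^t \langle u(s), d\rangle\, ds \le \int_0^t |u(s)|\, ds \le at$. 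Hence $\pi(y(t)) - \pi(x(t)) \ge |y_0 - x_0| + (b-a)t \ge |y_0 - x_0| > 0$ for every $t \ge 0$, using $a \le b$, so $x(t) \neq y(t)$ always and evasion succeeds.

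\textbf{Case $a > b$.} Here I would have Pursuer play pure pursuit: the positional strategy $u(x,y) = a\,(y-x)/|y-x|$ whenever $x \neq y$ (the value on the diagonal $x = y$ is irrelevant, e.g.\ set it to $0$, since the play has already ended once that set is reached). As a map of the state this is a composition of subtraction, the normalisation map $z \mapsto z/|z|$ on $\mathbb{R}^n \setminus \{0\}$, and scaling by the computable real $a$; each of these is computable on the relevant domain, so $u$ is computable on the region $\{x \neq y\}$ where it is ever evaluated. To see Pursuer wins against an arbitrary $v \in C_E$, set $r(t) = |y(t) - x(t)|$ and $e(t) = (y(t)-x(t))/|y(t)-x(t)|$. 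While $r(t) > 0$, $r$ is absolutely continuous and for a.e.\ $t$ we have $\dot r(t) = \langle \dot y(t) - \dot x(t), e(t)\rangle = \langle v(t), e(t)\rangle - a \le |v(t)| - a \le b - a < 0$. Integrating yields $r(t) \le r(0) - (a-b)t$, so $r$ must reach $0$ at some $t_c \le |y_0 - x_0|/(a-b)$; i.e.\ Pursuer catches Evader, with a catch time bounded uniformly in $v$.

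The hard part will not be the geometry but the bookkeeping: one must fix a class of admissible control functions (e.g.\ continuous, or locally integrable with Carath\'eodory solutions) so that the coupled system $\dot x = u(x,y)$, $\dot y = v(t)$ has a well-defined trajectory up to the catch time despite the discontinuity of $u$ on the diagonal. Because $r$ is strictly decreasing as long as it is positive, the actual trajectory stays in a region on which $u$ is locally Lipschitz, so existence and uniqueness up to time $t_c$ are routine, but this needs to be spelled out in the ambient conventions of Section~\ref{S:ComputableStrategies}. On the computability side the only subtlety is exactly this localisation: normalising by $|y-x|$ is a computable operation precisely because $u$ is only ever queried at states with $x \neq y$, and all the data $x_0, y_0, a, b$ are assumed computable.
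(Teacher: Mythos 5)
Your proof is correct, but for the case $a>b$ you take a genuinely different route from the paper. For $a\le b$ you use the same constant escape control $v=b\,(y_0-x_0)/|y_0-x_0|$; the paper merely asserts it wins, whereas your projection argument $\pi(y(t))-\pi(x(t))\ge|y_0-x_0|+(b-a)t>0$ supplies the verification, which is a welcome addition. For $a>b$, however, the paper does not use pure pursuit: it sets $\theta=|y_0-x_0|/(a-b)$ and has Pursuer play $u(t)=(y_0-x_0)/\theta+v(t)$, i.e.\ copy Evader's instantaneous velocity and superimpose a constant drift of magnitude $a-b$ toward $y_0$. Admissibility follows from $|u|\le(a-b)+|v|\le a$, and the catch is an exact one-line computation, $x(\theta)=x_0+(y_0-x_0)+\int_0^\theta v=y(\theta)$, at the precise time $\theta$, with no differential inequality and no ODE well-posedness issues (the right-hand side never becomes singular). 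The trade-off is that the paper's control is a function of $v(t)$, so it is ``positional'' only under the paper's convention that players observe each other's current velocities, and it foreshadows the parallel-pursuit constructions used later in Theorem~\ref{thm-half-space} and Theorem~\ref{epsilon-catch}. Your pure-pursuit control $u=a\,(y-x)/|y-x|$ is positional in the stricter state-feedback sense (it needs only the positions), at the price of the Grönwall-type estimate $\dot r\le b-a$ and the bookkeeping about the discontinuity on the diagonal that you rightly flag; both of those steps are sound as you present them, since $v$ is continuous (being computable) and the trajectory only meets the diagonal at the terminal time. Either argument establishes the proposition.
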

\begin{proof}
Assume $b\geq a$. Let $e=(y_0-x_0)/|y_0-x_0|$. The winning strategy for Evader is $v=be$, that is, Evader moves along the line passing through $x_0$ and $y_0$ in the direction opposite of $x_0$ with velocity $b$. The strategy is computable, clearly.

Assume that $a>b$.  We construct a winning strategy for Pursuer. Let $\theta=|y_0-x_0|/ (a-b)$. Consider the following strategy:
\begin{center}{
\vspace{-1mm}
$u=(y_0-x_0)/\theta+v.$}
\end{center}
\vspace{-1mm}
 Also, the strategy is admissible since $|u|\leq a$. Indeed,
\begin{center}{
\vspace{-1mm}
$|u|^2=|y_0-x_0|^2/\theta+(2/\theta)(y_0-x_0, v) +|v|^2\leq 
(a-b)^2+2(a-b)b +|v|^2\leq a^2$}
\end{center}

The pursuit can be completed by time $t=\theta$ no matter what strategy $v(t)$ Evader uses. This shows that $u$ is a winning strategy. Indeed,
$$
x(\theta)=x_0+ \int_0^{\theta} [(y_0-x_0)/\theta+v] \ dt = y_0+\int_0^{\theta} v(t)dt= y(\theta).
$$
Finally, note that the strategy $u$ is computable.
\end{proof}
More interesting case is when the players play inside of a half-space. Let $P$ be a hyperplane with a computable dense subset of rational points in it. The hyperplane divides $\mathbb R^n$ into two half-spaces. 


The game is defined as follows. The dynamics of Pursuer and Evader are given by equations as in (1),
and the sets of control functions are given as in (2) such that computable points  $x_0, y_0$ belong to the same half-space, say $H$.
The players move inside the half-space $H$.
We denote the game by $\Gamma(a,b, H)$. An interesting observation is that if $a>b$, then the strategy $u$ given in Proposition \ref{prop-space} might be illegitimate because Pursuer using the strategy might leave the space $H$. Therefore, Pursuer
needs to have more sophisticated winning strategy.

\begin{theorem}\label{thm-half-space}
Consider the game $\Gamma(a,b,  H)$. If $a\leq b$ then Evader has an open loop computable winning strategy. If $a>b$ then Pursuer has a positional computable winning strategy.
\end{theorem}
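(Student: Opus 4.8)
The plan is to handle the two cases separately. For $a \leq b$, the strategy from Proposition \ref{prop-space} already works verbatim: Evader moves with full speed $b$ along the ray from $x_0$ through $y_0$, away from $x_0$. This ray is contained in $H$ provided $y_0$ is at least as far from the boundary hyperplane $P$ as $x_0$ in the relevant sense — but actually one must be slightly careful, since the ray direction $e=(y_0-x_0)/|y_0-x_0|$ could point Evader towards $P$. The fix is standard: if $e$ has a component pointing out of $H$, Evader instead moves parallel to $P$ (project $e$ onto the hyperplane direction, renormalise, move at speed $b$); since Pursuer is strictly slower this still guarantees non-capture, and staying on a line parallel to $P$ keeps Evader inside $H$ forever. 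Computability is immediate since $P$, $x_0$, $y_0$, $b$ are all computable and the projection onto a computable hyperplane is a computable linear map.

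For the harder case $a > b$, the naive Proposition \ref{prop-space} strategy $u = (y_0-x_0)/\theta + v$ can push Pursuer across $P$, so I would use a \emph{pursuit-along-the-wall} or \emph{shadowing} idea. The key geometric observation: since the arena is convex (a half-space) and the players have the same maximal-speed structure as in $\mathbb{R}^n$ except for the wall constraint, Pursuer can use the following positional strategy. At each instant, let Pursuer aim not directly at Evader's current position $y(t)$ but at the point $y(t)$ as seen through the constraint — concretely, decompose the desired pursuit velocity into its component parallel to $P$ and its component normal to $P$; keep the parallel component, and for the normal component only move towards $P$ as far as the current distance to $P$ allows (clip it). One then shows via a Lyapunov-type estimate that the quantity $|x(t)-y(t)|$ still decreases at rate at least $a-b$ minus a term that vanishes as Pursuer approaches the wall, and that the parallel-to-$P$ separation is closed in finite time; once Pursuer and Evader share the same normal coordinate, the residual game is the full-space game of Proposition \ref{prop-space} played inside $P$ (or a parallel hyperplane), which Pursuer wins in finite time. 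Assembling these phases gives capture by some computable time bound depending only on $|y_0 - x_0|$, the distances of $x_0,y_0$ to $P$, and $a,b$.

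The remaining point is computability of the resulting positional strategy: the strategy $u(x,y)$ is built from the affine data of $P$ (a computable hyperplane), vector arithmetic, the computable constants $a,b$, and the $\min$/clipping of the normal component against the computable distance function $d(x,P)$. Since $\min$, normalisation away from $0$, orthogonal projection onto a computable hyperplane, and the Euclidean norm are all computable, $u$ is computable on the (closed) half-space; and every trajectory consistent with it is computable as the solution of an ODE with computable, Lipschitz right-hand side. I expect the main obstacle to be the Lyapunov estimate showing that the wall-clipping does not stall the pursuit — i.e.\ verifying that Pursuer's effective closing speed stays bounded below by a positive quantity (giving a finite capture time) even while Pursuer slides along $P$; once that estimate is in place the admissibility ($|u|\le a$) and computability claims follow the same pattern as in Proposition \ref{prop-space}.
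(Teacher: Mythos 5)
Your treatment of the case $a\leq b$ is essentially the paper's: Evader runs at full speed along a computable unit vector parallel to the bounding hyperplane $P$, chosen so that the distance to $x_0$ is nondecreasing. One small correction: your justification ``since Pursuer is strictly slower'' fails when $a=b$; the right argument is that $|y(t)-x_0|\geq\sqrt{|y_0-x_0|^2+b^2t^2}>bt\geq at\geq|x(t)-x_0|$, so capture is impossible even at equal speeds.

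The case $a>b$ contains a genuine gap. Your entire argument rests on ``a Lyapunov-type estimate'' showing that clipping the normal velocity component against the wall does not stall the pursuit, and you explicitly defer that estimate (``once that estimate is in place\dots''). But that estimate \emph{is} the theorem: without it you have only named a candidate strategy, not proved it wins. Moreover, the proposed phase structure is flawed: ``once Pursuer and Evader share the same normal coordinate, the residual game is the full-space game inside $P$'' is not a stable reduction, because Evader controls her own normal coordinate and can immediately break the alignment, so the game never collapses to a lower-dimensional one. The paper avoids all of this with a different geometric idea that makes clipping unnecessary. Taking $x_0=0$, Pursuer decomposes Evader's velocity in the moving frame $e(t)=y(t)/|y(t)|$, $n(t)\perp e(t)$, matches Evader's \emph{angular} velocity about $x_0$ by setting $u_n=(|x|/|y|)v_n$, and spends the remaining speed radially, $u_e=\sqrt{a^2-u_n^2}$. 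This keeps Pursuer on the segment from $x_0$ to $y(t)$ at all times, which lies in $H$ automatically by convexity --- so the wall constraint is never active --- and a direct computation shows the radial gap $|y(t)|-|x(t)|$ decreases at rate at least $a-b$, giving capture by time $|y_0|/(a-b)$. (If you want a projection-style argument closer in spirit to yours, the paper's Theorem \ref{epsilon-catch} shows the right formulation: project the \emph{unconstrained} pursuit trajectory onto the convex set using the $1$-Lipschitz metric projection, rather than clipping velocities differentially.) To repair your proof you would need either to supply the closing-rate estimate for the clipped dynamics in full, or to switch to one of these two arguments.
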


\begin{proof}
Without loss of generality we assume that $x_0=0$.
Assume that $b\geq a$. Let $e$ be a computable vector of length $1$ parallel to $H$ such that if Evader moves along the ray
$y_0+te$, with $t\geq 0$, the distance from $x_0$ increases with increasing $t$. Consider the strategy $v$ for Evader defined as $v=eb$. This is a computable (and open loop) strategy. Moreover, it is not too hard to see that it is a winning strategy.


Let $a>b$. To simplify notations, assume that the space is $\mathbb R^2$.  Let $y(t)$ be a position of Evader. Define two unit vectors: the first is $e(t)=y(t)/|y(t)|$, and  the second is $n(t)$ the vector orthogonal to $e(t)$. Let $v$ be Evader's strategy.  At time $t$,   compose $v(t)$ in two components based on
$e(t)$ and $n(t)$:
$$
v(t)=v_{e}(t) e(t)+ v_{n}(t) n(t).
$$
We now define the strategy $u$ for Pursuer. The strategy is written with respect to the system $\{e(t), n(t)\}$:
$$
u(t)=u_{e}(t) e(t)+ u_{n}(t) n(t), \  \  \mbox{where}
$$
$u_n(t)$=$(|x(t)|/|y(t)|) v_n(t)$ and $u_e(t)$=$\sqrt{a^2-u_n(t)^2}$. It is clear that $|u|\leq a$.


Note that the coordinates $v_e(t)$ and $u_e(t)$ can be expressed through the derivatives of the distance functions $dist_E(t)=|y(t)|$ and $dist_P(t)=|x(t)|$:
$$
dist^{\prime}_E(t)=v_e(t) \ \mbox{and} \ dist^{\prime}_P(t)=u_e(t).
$$
To show that $u$ is a winning strategy, we prove that for any strategy $v$ of Evader there is a $t_v$ such that $dist_P(t_v)=dist_E(t_v)$. For that we compute the difference $d(t)=dist_E(t)-dist_P(t)$. Since  $u_n(t)|=\frac{|x(t)|}{|y(t)|}\leq |v_n(t)|$ we get:
\begin{center}{
$d(t)= (|y_0|+\int_0^tv_e(s)ds) -(|x_0|+\int_0^tu_e(s)ds)
\leq  |y_0|+\int_0^t \sqrt{b^2-v_{n}(s)^2}ds -\int_0^t\sqrt{a^2-u_{n}(s)^2}ds
\leq  |y_0|+\int_0^t \sqrt{b^2-v_{n}(s)^2}ds -\int_0^t\sqrt{a^2-v_{n}(s)^2}
\leq |y_0| - \int_0^t  \frac{(a^2-b^2)}{\sqrt{b^2-v_{n}(s)^2} + \sqrt{a^2-v_{n}(s)^2}}$
}
\end{center}
Hence we conclude the following:
$$
d(t)\leq |y_0|-\int_0^t(a-b)ds=|y_0|-(a-b)t
$$
Initially $d(0)>0$, and at time $T=|y_0|/(a-b)$ we have $d(T)\leq 0$.  Since $d(t)$ is continuous,
Pursuer catches Evader  between times $0$ and $T$. Note that the strategy $u$ is a computable strategy.
\end{proof}

\subsection{Games with $\epsilon$-catch}

The set-up of the game we investigate is the following. The dynamics of Pursuer and Evader are given by the equations:
\begin{eqnarray}
\dot{x}=u,  \  x(0)=x_0,   \
\dot{y}=v, & y(0)=y_0,        x_0\neq y_0, \label{1}
\end{eqnarray}
where $x_0$, $y_0$ are computable points,
and  $u$, $v$ are strategies  taken from spaces $C_P$ and $C_E$ that consist of
Borel functions (which are not necessarily continuous)  with $|{u(t)}| \leq 1$, $|v(t)|\leq 1$ for $t \ge 0$.
The players play inside a compact computable convex set $S\subset \mathbb R^n$.
So, $x(t), y(t) \in S$ for all $t\geq 0$.


Let $\epsilon> 0$ be  a rational number. The goal of Pursuer is to reach the $\epsilon$-neighbourhood of Evader.
We assume that $|x_0-y_0|> \epsilon$. At all  times $t$ the players know the velocities $u(t)$, $v(t)$ and positions  $x(t)$, $y(t)$.


We denote this Pursuer-Evasion game by $\Gamma(S, \epsilon)$. A  definition of winning in game $\Gamma(S, \epsilon)$ is this:
\begin{definition} {\em
{\em Pursuer wins (or completes pursuit in) the game $\Gamma(S, \epsilon)$} if there exists a strategy $u \in C_P$ such that for all $v \in C_E$ there exists a time $t_v$ such that $|x(t_v)- y(t_v)| \leq \epsilon $. Similarly,  {\em Evader wins the game (or evasion is possible)} if there exists a strategy $v \in C_E$ such that for all $u \in C_P$ we have $|x(t)-y(t)|> \epsilon $ for all $t$.}
\end{definition}

To study $\Gamma(S, \epsilon)$ we use two auxiliary games.
The winning strategies in these two  games will be needed to construct the strategies with nice computability-theoretic properties
in the original game $\Gamma(S, \epsilon)$.

Let $u$ be Pursuer's strategy in a differential game.  The strategy depends on $(x,y,v)$, where $x$ is Pursuer's position, $y$ is Evader's position, and $v$ is the velocity of Evader at  $y$. Assume that Pursuer follows $u$. If Evader follows a strategy $v(t)\in C_E$, then  Pursuer's velocity at time $t$ equals $u(x(t), y(t), v(t))$. Hence, the position of Pursuer at point $t$ equals:
$$
x_{u,v}(t)=x_0+\int_{0}^t u(x(s), y(s), v(s))ds.
$$
\begin{definition}
{\em The functions $x_{u,v}(t)$ defined above are called {\em trajectories (of Pursuer)} consistent with $u$ in the given game.}
\end{definition}
\begin{theorem} \label{epsilon-catch}
Let $\Gamma(S, \epsilon)$ be the game described above. Pursuer has a winning strategy $u$ such that for every continuous $v\in C_E$ the trajectory $x_{u,v}$ is a function computable relative to $v$.
\end{theorem}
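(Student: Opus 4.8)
The plan is to use Petrosyan's parallel pursuit (also called $\Pi$-strategy) and to analyze its computability carefully. First I would recall the geometric idea: at each moment, given Pursuer's position $x$, Evader's position $y$, and Evader's velocity $v$ (with $|v| \le 1$), Pursuer picks a velocity $u$ of the form $u = v - \lambda(x - y)$ where $\lambda \ge 0$ is chosen so that $|u| = 1$. Geometrically, $u$ is the reflection-type vector pointing so that Pursuer always moves "towards" Evader in a way that the distance $|x(t) - y(t)|$ is nonincreasing, and in fact decreases at a uniform rate until capture. Such $\lambda$ exists and is unique whenever $|x - y| > 0$ and $|v| \le 1$; it is given by solving a quadratic, $\lambda = \langle v, x-y\rangle/|x-y|^2 + \sqrt{(\langle v, x-y\rangle/|x-y|^2)^2 + (1 - |v|^2)/|x-y|^2}$, hence $\lambda$, and therefore $u$, is computable from $(x, y, v)$ as long as $|x - y|$ is bounded away from $0$ — which holds before capture since we only need to win, i.e.\ reach the $\epsilon$-neighbourhood with $\epsilon > 0$.

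The subtlety the paper flags is that the players must stay inside the compact convex set $S$, so the naive velocity $u$ may push Pursuer out of $S$. Here I would invoke the lemma (mentioned in the introduction, contribution 2) that the metric projection $\pi_S : \mathbb{R}^n \to S$ onto a computable compact convex set is a computable function. The fix: Pursuer computes the parallel-pursuit velocity $u$, tentatively moves, and then projects back onto $S$; equivalently, one defines the trajectory as the solution of a differential inclusion / Skorokhod-type reflection problem keeping $x(t) \in S$, whose unique solution can be obtained as a limit of Euler-type polygonal approximations each step of which uses $\pi_S$. Since $\pi_S$ is nonexpansive, projecting onto the convex set $S$ can only decrease the distance to $y(t) \in S$, so the monotone-decrease estimate for $|x(t) - y(t)|$ survives the projection, and capture within the $\epsilon$-ball still occurs by a uniformly bounded time $T = T(|x_0 - y_0|, \mathrm{diam}(S))$. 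I would carry out this estimate explicitly: $\frac{d}{dt}|x(t)-y(t)| \le -c < 0$ for some constant $c$ while $|x(t)-y(t)| > \epsilon$, giving capture by time $\le |x_0 - y_0|/c$.

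For the computability conclusion, I would argue: fix a continuous $v \in C_E$; then $t \mapsto (x(t), y(t), v(t))$ evolves via an ODE whose right-hand side is, away from $\{x = y\}$, computable (composition of $\pi_S$, the closed-form $\lambda$, arithmetic). Standard effective versions of the Picard–Lindelöf / Cauchy–Peano theorem in computable analysis (solution operator for ODEs with computable, locally Lipschitz — or at least computable and continuous with effective modulus — right-hand side is computable) then give that $x_{u,v}$ is computable relative to $v$. I would need to check a uniform modulus of continuity for the right-hand side on the relevant compact region $\{(x,y): x,y \in S, |x-y| \ge \epsilon'\}$ for a suitable $\epsilon' < \epsilon$; this is where one must be slightly careful, because near capture $\lambda$ blows up, but since we stop caring once within $\epsilon$ of $y$, we can truncate/stop the strategy there, and the closed-form expression for $\lambda$ is computable and bounded on that region.

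The main obstacle I expect is the interaction between the reflection/projection onto $S$ and the distance-decrease estimate: one must verify that projecting the tentative position back into $S$ does not destroy the parallel-pursuit inequality, and that the resulting constrained trajectory is well-defined, unique, and computable (the Euler-polygon limit must be shown to converge effectively). A secondary point is handling the non-smoothness of $\pi_S$ at the boundary of $S$ — the right-hand side is continuous but perhaps not Lipschitz there — so I would rely on the nonexpansiveness of $\pi_S$ together with convexity to still get uniqueness of the reflected trajectory (a Skorokhod problem in a convex domain has a unique solution), and on the effective Peano theorem plus this uniqueness to extract computability. Everything else — existence and uniqueness of $\lambda$, the bound $|u|\le 1$ (in fact $=1$), the bounded capture time — is routine calculation with quadratics and inner products that I would not spell out in detail here.
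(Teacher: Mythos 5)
Your overall architecture (parallel pursuit plus the computable metric projection onto $S$) is close in spirit to the paper's proof, but the core quantitative claim on which your capture argument rests is false, and this is a genuine gap. With equal maximal speeds, the $\Pi$-strategy $u = v - \lambda(x-y)$ gives $\dot x - \dot y = -\lambda(x-y)$, hence $\frac{d}{dt}|x-y| = -\lambda|x-y|$; but your own formula shows $\lambda = 0$ whenever $|v|=1$ and $\langle v, x-y\rangle \le 0$, i.e.\ whenever Evader runs at full speed in any direction making a non-acute angle with the direction toward Pursuer. So there is no uniform constant $c>0$ with $\frac{d}{dt}|x-y| \le -c$ on the region $\{|x-y|>\epsilon\}$, and no capture-time bound $|x_0-y_0|/c$: in unconstrained $\mathbb{R}^n$ the equal-speed parallel pursuit simply never closes the gap against an Evader who flees radially. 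Compactness of $S$ is the only thing that can force capture, but in your write-up $S$ enters solely through the reflection keeping Pursuer inside; you never use it to bound how long Evader can keep $\lambda$ small, and (by Besicovitch's theorem for the disk) no argument of this kind can yield exact capture, so extracting even $\epsilon$-capture from the plain equal-speed $\Pi$-strategy would require a delicate estimate that you have not supplied.

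The paper closes exactly this gap with a different mechanism: it introduces a fictitious pursuer with speed $\rho = 1+\epsilon/\theta > 1$ (where $\theta$ is a computable constant determined by $x_0$, $y_0$, $\epsilon$ and a supporting hyperplane of $S$), who achieves exact capture in a half-space containing $S$ by time $\theta$ precisely because of this speed advantage; its trajectory is then pushed into $S$ by the nonexpansive projection (still exact capture, still speed at most $\rho$), and finally the real Pursuer's trajectory is obtained by shrinking that trajectory toward $x_0$ by the factor $1/\rho$ --- this restores $|u|\le 1$, stays in $S$ by convexity, and perturbs the trajectory by at most $(1-1/\rho)\rho\,\theta = \epsilon$, which is exactly where the $\epsilon$-catch (rather than exact catch) comes from. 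If you want to salvage your approach you need some analogue of this speed surplus; as written, your argument proves capture only against an Evader who obligingly keeps $\lambda$ bounded below. Your computability discussion (effective ODE solving, Skorokhod reflection) is also heavier than needed --- the paper's trajectory is an explicit composition of the computable projection with an integral of a closed-form expression --- but that is a secondary point.
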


\begin{proof} For the proof we introduced two auxiliary differential games and study their winning strategies.

\noindent
{\bf The first auxiliary game}.
Let $l$ be a ray starting at $x_0$ and passing through $y_0$. The constants that define the ray $l$ are computable
since $x_0$ and $y_0$ are computable.  Let $H$ be a {\em supporting  hyperplane} for $S$ that goes through $l$, that is, $H$ is defined as follows:
(1) $S\cap H\neq \emptyset$,  (2) $H$ does not contain an interior point of $S$, and (3) $H$ is perpendicular to $l$.  Such $H$ exists (in fact, there are two of them and we select one) because $S$ is compact and convex. The hyperplane divides $\mathbb R^n$ into two half-spaces.
Let $X$ be the closure of the half-space that contains $S$.
The half-space $X$ is computable.

\begin{figure}[h]
 \includegraphics[width=0.5\textwidth]{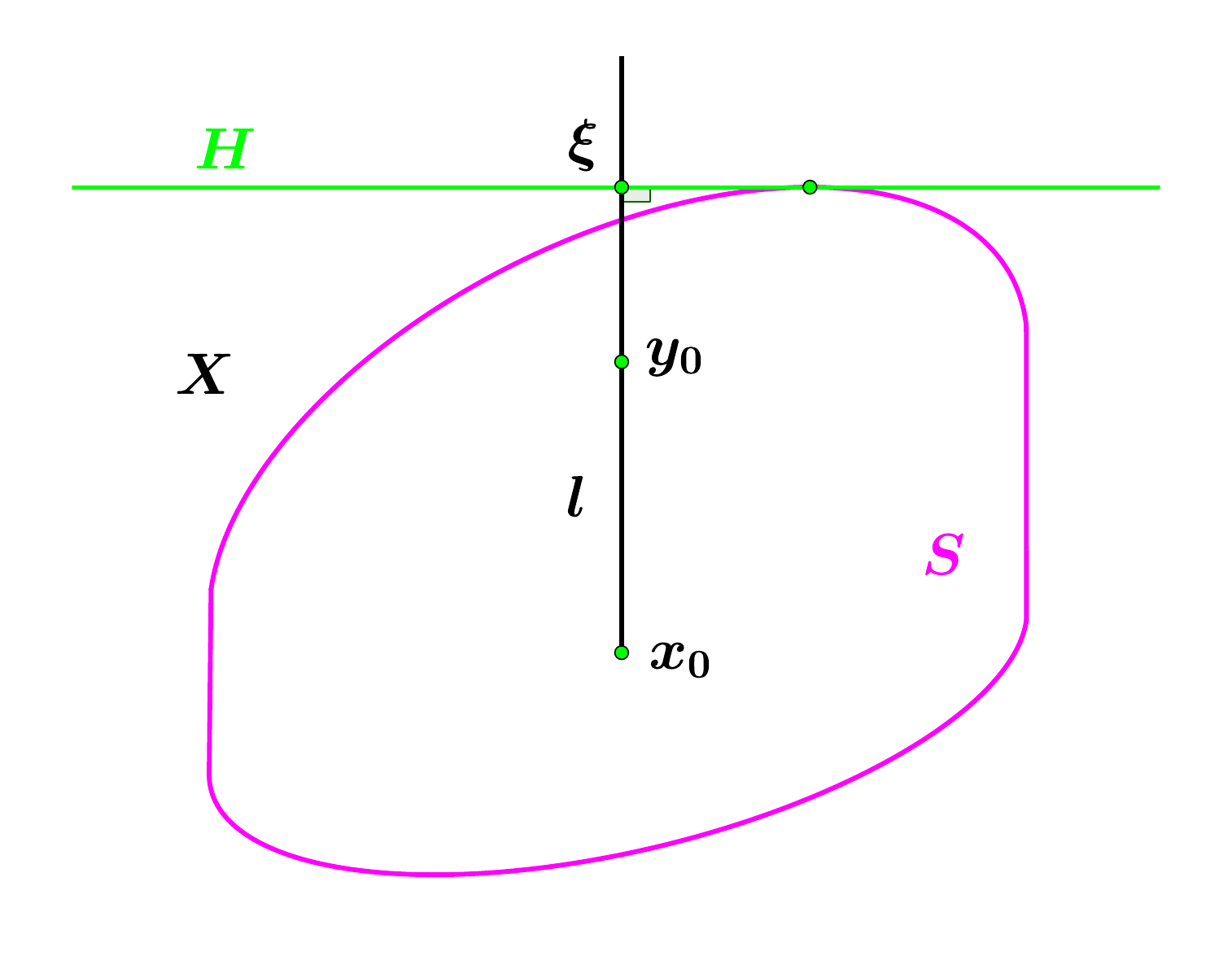}
 \caption{Game in the half space X.}
 \label{F1}
\end{figure}


Let $\xi$ be the intersection point of $H$ and $l$. This is a computable point. Consider the set of all points $\eta$ in the hyperplane $H$ such that \
$|\eta-x_0|=|\eta-y_0| + \epsilon$.  The distance $\theta=|\eta-y_0|$ is a computable distance; for instance, it can be found through solving the equation
$$
(\theta-\epsilon)^2-|\xi-x_0|^2=\theta^2-|\xi-y_0|^2.
$$
The first auxiliary game is defined as follows:
\begin{enumerate}
\item The players are Pursuer $z_1$ and Evader $y$. The dynamics  are
\begin{center}{
$\dot{z}_1=w,  \dot{y} =v$, $v\leq 1$,     $|w|\leq 1+\epsilon/\theta=\rho$,   $z_1(0)=x_0$,  $y(0)=y_0$,}
\end{center}
where $v$ and $w$ are Borel-measurable functions.
\item Both players, Pursuer $z_1$ and Evader, play inside the half-space $X$ that is defined above.
\item Pursuer $z_1$ wants to ensure an exact catch.
\end{enumerate}
\noindent
Pursuer $z_1$ has a computable winning strategy in the first auxiliary game defined as
$$
w_1(t)=v(t)-(v(t), e(t))e+e\sqrt{\rho^2-1+(v(t), e)^2}.
$$

\medskip
\noindent
{\bf The second auxiliary game}. The definition of this game is as follows:
\begin{enumerate}
\item The players play inside $S$.  The players are Pursuer $z_2$ and Evader $y$.
\vspace{-1mm}
\item The dynamics of the game are
 \vspace{-1mm}
 \begin{center} {
 $\dot{z}_2=w$,  $\dot{y}=v,$ \  \  $z_2(0)=x_0$, \ $y(0)=y_0$,\\
\  $|v| \leq 1$, \   $|w|\leq 1+\epsilon/\theta=\rho$,}
\end{center}
\vspace{-1mm}
where $x_0$ and $y_0$ are as in $\Gamma(S, \epsilon)$.
\vspace{-1mm}
\item
Pursuer $z_2$  wants to ensure exact catch.
\end{enumerate}
\begin{lemma}\label{Lem:2nd-game}
In the second auxiliary game Pursuer $z_2$ has a winning strategy such that
every trajectory consistent with the strategy is computable from Evader's strategy.
\end{lemma}
To prove the lemma,  we  use Pursuer's winning  strategy $w_1$ from the first auxiliary game. The main idea is to project the winning strategy of the first auxiliary game onto the second. For this we consider the projection function $P(z)$, $P:\mathbb R^n \rightarrow S$ which is defined by the equation:
$$
|P(z)-z|=\min\limits_{\xi\in S}|\xi-z|.
$$

\begin{claim} \label{clm:projection}
The function $P(z)$ is correctly defined and computable.
\end{claim}
The value $P(z)$ is correctly defined because $S$ is compact and convex. We show that $P(z)$  is
computable. Uniformly on $z$,
for every rational number $\epsilon>0$ one can effectively find a rational point $x_{\epsilon}$ and a ball $B_{r(\epsilon)}(z)$ of radius $r(\epsilon)$ with center $z$ such that
\begin{enumerate}
\item $B_{r(\epsilon)}(z) \cap S\neq \emptyset$. So,  $B_{r( \epsilon)}(z) \cap S$ contains $P(z)$.
\item We have $|P(z)- x_{\epsilon}|\leq \epsilon$. 
\end{enumerate}


All these can computably be done because $S$ is convex and compact set with the property that all rational points of $S$ is a dense and computable set.



The  function $P(z)$ has the following properties:
\begin{enumerate}
\item[P1.] For all $z'\in S$ we have $P(z')=z'$.
\item[P2.] $|P(z')-P(z'')| \leq |z'-z''|$ for all $z', z'' \in \mathbb R^n$.
\item[P3.] For any trajectory $x_{w_1,v}(t)$ of Pursuer in the first auxiliary game,  the function $P(x_{w_1,v}(t))$ is computable relative to $v$, where $t\in [0,\theta]$.
\end{enumerate}
For Property 3, recall that $w_1$ is the winning computable strategy in the first auxiliary game. So, the function, $P(x_{w_1,v}(t))$ is computable by Claim \ref{clm:projection}.

Assume that the trajectory $x_{w_1,v}(t)$ is consistent with $v\in C_E$ and $w_1$. Pursuer's $z_2$ strategy in the second auxiliary game is then  defined through the equation $z_2(t)=P(x_{w_1,v}(t))$. Note that $z_2$ is computable from $x_{w_1,v}$ and thus from $v$.
We need to show that for $z_2(t)$ we have $|\dot{z}_2| \leq \rho$.  Indeed:
\begin{eqnarray*}
|\dot{z}_2(t)|=
 \lim\limits_{h\rightarrow 0^+}\frac{|z_2(t+h)-z_2(t)|}{h}=\\
= \lim\limits_{h\rightarrow 0^+}\frac{|P(x_{w_1,v}(t+h))-P(x_{w_1,v}(t))|}{h}\le \\
 \le \lim\limits_{h\rightarrow 0^+}\frac{|x_{w_1,v}(t+h)-x_{w_1,v}(t)|}{h}=|\dot{x}_{w_1,v}(t)|\leq \rho.\\
\end{eqnarray*}
Now we show that Pursuer $z_2$ can complete a pursuit. Indeed, let $\tau $ be time at which Pursuer $z_1$ captures Evader in the first auxiliary  game. Since $y(\tau)\in S$ we have $z_1(\tau) \in S$. Then by Property 1, above we have $z_2(\tau)=P(x_{w_1,v}(\tau))=y(\tau)$. We proved the lemma.


Now we prove the theorem. Let $w_2$ be the winning strategy for Pursuer described in the second auxiliary game. Note that this is not necessarily computable strategy. Using $w_2$, we show that Pursuer has a desired winning strategy in the game $\Gamma(S, \epsilon)$.


 We claim that Pursuer's desired strategy $u$ is defined by $u(t)=(1/\rho)w_2(t)$, where $0\leq t \leq \theta$. Indeed, we
 note that $|u(t)|\leq 1$. Now for any Evader's strategy $v$ in $\Gamma(S,\epsilon)$ we reason as follows.  Firstly, we show that $x_{u,v}(t)\in S$.  For this, note that
$$
x_{u,v}(t)=x_0+\int_0^t \frac{1}{\rho} \ w_2(s)ds= \left(1-\frac{1}{\rho}\right)x_0+\frac{1}{\rho} \ x_{w_2,v}(t).
$$
\noindent
Since $1/\rho<1$, by convexity we have $x_{u,v}(t)\in S$. Secondly, the distances between the trajectories $x_{w_2,v}(t)$ (in the second auxiliary game) and $x_{u,v}(t)$ are uniformly bounded by $\epsilon$, where $0 \le t \le \theta$. To see this, consider the following calculations:
\begin{eqnarray*}
|x_{w_2,v}(t)-x_{u,v}(t)|=\hspace{20mm} \\
=\left |x_0+\int\limits_0^t \dot{x}_{w_2,v}(s)ds-x_0-\int\limits_0^t \frac{1}{\rho}\dot{x}_{w_2,v}(s)ds\right|\\
=\frac{\rho-1}{\rho}  \left|\int \limits_0^t \dot{x}_{w_2,v} (s) ds   \right | \leq \frac{\rho-1}{\rho}  \int \limits_0^t |\dot{x}_{w_2,v} (s) | ds \\
\leq  \frac{\rho-1}{\rho}\rho \cdot t=\frac{\epsilon}{\theta} \cdot t\leq \epsilon. \hspace{17mm}
\end{eqnarray*}

Finally, since $z_2(\tau)=y(\tau)$ we get $|y(\tau)-x_{u,v}(\tau)|=|x_{w_2,v}(\tau)-x_{u,v}(\tau))|\leq \epsilon $. Thus, Pursuer is able to be within $\epsilon$-distance from Evader. We conclude that $u$ is a winning strategy.


The only thing left is to note that the trajectories $x_{u,v}(t)$ are all computable from $v$ because of Lemma \ref{Lem:2nd-game} and Property (P3) of the projection function that we stated above.
\end{proof}
We note that our proof does not imply that $u$ is a computable winning strategy.
The strategy $u$ is discontinuous on the set $\partial(S)$ of boundary points of $S$. However, at all other points $S\setminus \partial(S)$ the strategy $u$ is computable. Note that $\partial(S)$ has  measure $0$. Hence, we have the following corollary:

\begin{corollary}
In the game $\Gamma(S, \epsilon)$ Pursuer has a winning strategy $u$ that is computable almost everywhere in $S$. \qed
\end{corollary}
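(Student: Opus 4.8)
The plan is to read off the required strategy from the proof of Theorem~\ref{epsilon-catch} and pinpoint exactly where computability can fail. Recall that the winning strategy $u$ for $\Gamma(S,\epsilon)$ was obtained by setting $u = (1/\rho)w_2$, where $w_2 = \dot{z}_2$ for $z_2(t) = P(x_{w_1,v}(t))$, with $w_1$ the \emph{computable} winning strategy of the first auxiliary game and $P:\mathbb{R}^n\to S$ the metric projection. As a feedback rule, $u$ tells Pursuer to move according to $w_1$, transported through $P$.

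First I would show that $u$ is computable at every configuration in which Pursuer sits in the interior of $S$. The key geometric observation is that if the projected position $z_2(t)=P(x_{w_1,v}(t))$ lies in the interior of $S$, then $x_{w_1,v}(t)$ itself already lies in $S$ and equals $z_2(t)$, because the projection of any point outside a closed convex set lands on its boundary. Hence, near such a time, $P$ acts as the identity by property (P1), so $\dot z_2 = \dot x_{w_1,v} = w_1$ and $u = (1/\rho)w_1$, which is computable via the explicit formula for $w_1$ together with Claim~\ref{clm:projection}. Conversely, as soon as $x_{w_1,v}$ passes outside $S$ the feedback rule becomes genuinely discontinuous at the corresponding point of $\partial(S)$; since every computable function is continuous, $u$ is not computable on any neighbourhood of a point of $\partial(S)$. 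This confines the non-computability set to $\partial(S)$.

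It remains to recall that $\partial(S)$ is a Lebesgue-null set: if $S$ has empty interior then $S$ itself is null and there is nothing to prove, and otherwise, translating so that the origin is interior to $S$, the inclusion $(1-\delta)S\subseteq \mathrm{int}(S)$ for all $\delta>0$ forces the measure of $\mathrm{int}(S)$ to equal that of $S$, so $\partial(S)$ has measure zero. Combining the two parts, $u$ is a winning strategy for Pursuer that is computable on $S\setminus\partial(S)$, a full-measure subset of $S$, which is the claim.

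The step I expect to be the main obstacle is the first one: making precise that the feedback strategy --- which a priori depends on the entire state $(x,y,v)$ and on the \emph{derivative} of $t\mapsto P(x_{w_1,v}(t))$ rather than on $P$ directly --- really is a computable function at interior configurations. The geometric fact that $P$ is locally the identity off $\partial(S)$ is exactly what lets one reduce this to the already-established computability of $w_1$; the remaining ingredients (the harmless rescaling by $1/\rho$, the null-set statement for $\partial(S)$, and reading ``computable almost everywhere'' as ``computable after deleting a Lebesgue-null set'') are routine.
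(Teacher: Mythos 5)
Your proposal is correct and follows essentially the same route as the paper, which derives the corollary directly from the remark that the strategy $u$ built in Theorem~\ref{epsilon-catch} is discontinuous only on $\partial(S)$, a Lebesgue-null set. You merely fill in two details the paper leaves implicit --- that the projection acts as the identity near interior configurations (so $u$ reduces to the computable $w_1$ there) and the standard scaling argument showing $\partial(S)$ is null for a convex body --- both of which are consistent with the paper's intent.
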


\section{Pursuit and evasion in a  disk}\label{sec:L&MinDisk}
In this section Pursuer and Evader  move inside the 2-dimensional closed disk of radius $R$ with centre $O$ at $(0,0)$, where $R$ is a computable number. Both players move with velocity at most  $1$.  
Pursuer wants to catch Evader in finite time, and Evader wants to avoid being captured. Say that Evader {\em wins} the game
if Evader has a strategy $v$ such that for all strategies $u$ of Pursuer $x(t)\neq y(t)$ for all times $t$, where $x$ and $y$ are consistent with $u$ and $v$, respectively.

The winning strategy constructed by Besicovitch for these games is, as we mentioned, not computable. It involves the undecidable problem of testing if a real point $x$ belongs to a real line.  We will show how to modify this strategy to obtain a computable one. However, this requires us to introduce the notion of a {\em non-deterministic strategy}. Informally, a non-deterministic strategy does not completely specify the behaviour of the player, but any deterministic behaviour consistent with the strategy will be winning. This is a common phenomenon in computable analysis. Often we exhibit an algorithm to compute some solution, but the solution of the algorithm  depends on the specific encoding of the input.  A classic example is that given a non-constant polynomial, we can compute some complex root of it, but there is no computable choice function for this \cite{weihrauchd}.

We construct a strategy for Evader that acts in the following way. Initially, Evader selects a direction $v_0$ and a time delay $\Delta_0$ based on the current position of Evader and Pursuer $p_0$. Then Evader moves in direction $v_0$ for time $\Delta_0$. At time $\Delta_0$, Evader observes again the position $p_1$ of Pursuer, and selects direction $v_1$ and time $\Delta_1$, and this continous on. We call strategies of this form \emph{piecewise-blind}.  To be a winning strategy, Evader needs to ensure that provided that $d(p_{t},p_{t+1}) \leq \Delta_t$, the position $e_t$ of Evader at time $t$ never coincides with $p_t$, and that $\sum_{t \in \mathbb{N}} \Delta_t$ diverges to infinity. Such a strategy is a computable non-deterministic, if $\Delta_t$ is selected by a computable multivalued function of $t, p_t, e_t$. If $\Delta_t$ can even be chosen as a computable single valued function, we might still have a computable non-deterministic strategy.


\begin{theorem} \label{Thm:Algorithm}
Consider the pursuit and evasion game in the disk as described above.
There exists a computable non-deterministic piecewise-blind winning strategy for Evader that works for any given initial position.
\end{theorem}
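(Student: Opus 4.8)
The plan is to recast Besicovitch's escape strategy inside the piecewise-blind framework and then repair its single non-computable ingredient --- the choice of which of two opposite running directions to take --- by weakening an equality test to an open condition and resolving the ambiguous case non-deterministically. Throughout, write $e_n$ and $p_n$ for the positions of Evader and Pursuer at the $n$-th observation time $t_n$, let $O$ be the centre of the disc and $R$ its (computable) radius, and recall that Evader commits at time $t_n$ to a direction $d_n$ and a duration $\Delta_n$, running straight for time $\Delta_n$ before observing $p_{n+1}$ at time $t_{n+1}=t_n+\Delta_n$.

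First I would fix a \emph{deterministic} schedule $\Delta_n = \lambda/(n+1)$, choosing the rational $\lambda > 0$ small enough --- an effective choice, since $R^2 - |e_0|^2$ is computable --- that $\sum_n \Delta_n^2 = \lambda^2 \pi^2 / 6 \le R^2 - |e_0|^2$; note $\sum_n \Delta_n = \infty$. (I first reduce to $0 < |e_0| < R$ by a harmless preliminary move: compute a positive rational lower bound $q$ for $|p_0 - e_0| > 0$, and if $e_0 = O$ run outward a distance $q/3$, if $|e_0| = R$ run inward a distance $q/3$; in either case Evader moves less than $\frac{1}{2}|p_0 - e_0|$, so it is not caught during this move and ends strictly inside the disc, after which the construction proceeds with $e_0$ and the budget $R^2 - |e_0|^2$ replaced by their new values.) At time $t_n$ Evader sits at the point $e_n$ with $|e_n|^2 = |e_0|^2 + \sum_{k<n} \Delta_k^2$, so $|e_n| \ge |e_0| > 0$ and the unit vector $\theta_n$ perpendicular to $Oe_n$ is computable; Evader will run in direction $d_n \in \{\theta_n, -\theta_n\}$. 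Since $d_n \perp e_n$ we have $|e_n + \sigma d_n|^2 = |e_n|^2 + \sigma^2$ for $\sigma \in [0, \Delta_n]$, so the whole step-$n$ trajectory stays within distance $\sqrt{|e_0|^2 + \sum_{k \le n} \Delta_k^2} \le R$ of $O$: \emph{Evader stays in the disc}, whatever the signs.

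The crux is the sign of $d_n$. Put $\delta_n = |p_n - e_n| > 0$ and $c_n = (p_n - e_n) \cdot \theta_n$. A direct computation shows that if $(p_n - e_n) \cdot d_n < \delta_n^2 / (2\Delta_n)$ --- call this $(\star_n)$ --- then $|p_n - e_n - \sigma d_n| > \sigma$ for all $\sigma \in [0, \Delta_n]$, hence a Pursuer of speed $\le 1$ starting step $n$ at $p_n$ cannot be at Evader's location at any time of the step; in particular $\delta_{n+1} > 0$. Therefore, if $(\star_n)$ holds at every step then $\delta_n > 0$ for all $n$ by induction from $\delta_0 > 0$, Evader is never caught, and since $\sum_n \Delta_n = \infty$ the play lasts forever --- so the strategy is winning. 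To choose a sign realising $(\star_n)$ computably from a representation of $p_n$: the two candidate values of $(p_n - e_n) \cdot d_n$ are $c_n$ and $-c_n$, and since $\delta_n^2/(2\Delta_n) > 0$, whichever sign makes the value $\le 0$ certifies $(\star_n)$. Evader dovetails three semidecidable tests: (1) ``$c_n > 0$'', outputting $-\theta_n$; (2) ``$c_n < 0$'', outputting $\theta_n$; (3) ``$|c_n| < \delta_n^2/(2\Delta_n)$'', outputting $\theta_n$ --- each test, when it fires, witnesses $(\star_n)$ for the chosen sign. The search always halts: if $c_n \ne 0$ then (1) or (2) fires, and if $c_n = 0$ then (3) fires because $\delta_n > 0$ and $\Delta_n$ is a fixed rational. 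Which test fires first depends on the presentation of $p_n$, so the output is genuinely multivalued --- this is the non-determinism --- but every output satisfies $(\star_n)$, so every play consistent with the strategy is winning. This yields the claimed computable, non-deterministic, piecewise-blind winning strategy.

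The main obstacle --- and essentially the only subtlety --- is exactly this sign choice: deciding $c_n > 0$ versus $c_n < 0$ versus $c_n = 0$ is the undecidable ``is this point on this line'' test, which is why Besicovitch's original strategy is not computable. What makes the repair go through is that we never need the exact Besicovitch condition $(p_n - e_n) \cdot d_n \le 0$: the open condition $(\star_n)$ suffices, and because $\Delta_n$ is fixed in advance rather than shrinking with $\delta_n$, test (3) is a safety net guaranteed to fire whenever $c_n$ is too small for (1) or (2) to resolve quickly. A secondary point to handle carefully is that the algorithm must be \emph{total} on every play consistent with the strategy; this requires the invariant $\delta_n > 0$, which is precisely what $(\star_n)$ buys us --- so ``Evader is never caught'' and ``the strategy is total'' are obtained by a single simultaneous induction on $n$.
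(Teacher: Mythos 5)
Your proposal is correct and is essentially the paper's own proof: the same Besicovitch spiral (run perpendicular to the radius $Oe_n$ for a divergent sequence of durations whose squares sum to at most the remaining radial budget $R^2-|e_0|^2$), with the same non-deterministic repair of the side choice --- the exact test is replaced by overlapping semidecidable open conditions, at least one of which must hold because $\delta_n>0$, and whichever fires first is taken. Your condition $(\star_n)$ is exactly the paper's condition that $p_n$ lie outside the closed ball of radius $\Delta_n$ about the target point $e_{n+1}$ (just expanded as a dot product), and your simultaneous induction ``$(\star_n)$ at every step $\Rightarrow$ never caught and the algorithm is total'' matches the paper's Steps 1 and 2. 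One small slip: your preliminary reduction branches on the exact tests ``$e_0=O$'' and ``$|e_0|=R$'', which are themselves undecidable on represented points; this is harmless, since it is repaired by the very dovetailing trick you use for the sign choice (e.g.\ run the semidecidable tests ``$|e_0|>0$'' and ``$|e_0|<q/100$'' in parallel and act on whichever fires first, and similarly near the boundary), but as literally written that step is not computable.
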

\begin{proof}
\noindent
 We slightly change S. Besicovitch's strategy and explain that the new strategy is a desired one.

If Evader is on the border, then she can make a move inside without being caught (by moving strictly less than half its current distance to Pursuer). We can thus assume that Evader is inside the disk.  Let $r$ be the distance from the position $E_0$ of Evader to the border of the disk
at time $t_{0} =0$; the value $r$ is our parameter, and it is computable from the positions.


Let $E_t$ and $P_t$ be positions of Evader and Pursuer  at time $t$, respectively.
We change directions of the velocities of Evader
at times $t_0, t_{1} ,t_{2} ,\ldots ,t_{n} ,\ldots $, where $t_0=0$, $E_{t_0}=E_0$, and $t_{i} $ will be  the times at which the distance from $E_{t_{i}}$
to the circumference of the border of the disk equals $r/(i+1)$.

To describe Evader's desired strategy, we need some notations.
Let $A_i$ be the line from the centre of the disk  $O$ to the position $E_{t_i}$ and $B_i$ be the line perpendicular to $A_i$ going through $E_{t_i}$. These two lines  are computable from $E_{t_i}$.   On line $B_i$, we choose two points $E_{lt_{i}}$ and $E_{rt_{i}}$
such that
$$
|OE_{lt_{i}}|=|OE_{rt_{i}}|= R- r/(i+2).
$$
Geometrically, we can view these points as left and right of $E_{t_i}$ (respectively) on the line $B_i$; that is why we use the indices $l$ and $r$. Thus, the distance from the positions $E_{lt_{i}}$ and $E_{rt_{i+1}}$ to the border of the disk is $r/(i+2)$. By induction, we know that  $|OE_{t_i}| =R- r/(i+1)$.
 Construct the balls
$B(E_{lt_{i}}, |E_{t_i}E_{lt_{i}}|)$  and $B(E_{rt_{i}}, |E_{t_{i}} E_{rt_{i}}|)$, where $B(x, q)$ is for the ball of radius $q$ and centred at $x$.
By construction, these two balls have only one point  in common, which is $E_{t_i}$.  Hence, the current position $P_{t_i}$
of Pursuer  does not belong to one of these balls unless $P_{t_i}=E_{t_i}$. Below, we will show that  $P_{t}=E_{t}$  is impossible. Hence, given an approximation to $P_{t_i}$ we can compute a ball that does not contain $P_{t_i}$.
We now construct the following strategy for Evader. Select a ball among the balls
$B(E_{lt_{i}}, |E_{t_i}E_{lt_{i}} |)$ and $B(E_{rt_{i}}, |E_{t_{i}} E_{rt_{i}}|)$ that does not contain $P_{t_i}$. It is this choice which is by necessity non-deterministic. If
$P_{t_i} \notin B(E_{lt_{i}}, E_{t_i}E_{lt_{i}})$ at the time $t_i$ is confirmed first, then Evader moves towards $E_{lt_{i}}$ with the velocity 1,
and if $P_i \notin B(E_{rt_{i}}, E_{t_i}E_{rt_{i}})$ is confirmed first, then Evader moves towards $E_{rt_{i}}$ with the velocity 1.
Set $E_{t_{i+1}}=E_{lt_{i}}$ in the first case, and $E_{t_{i+1}}=E_{rt_{i}}$ in the second.



Now we prove that the strategy described is a winning strategy for Evader. The proof is a beautiful argument given by
S. Besicovitch described  in  \cite{littlewood} but for completeness we provide the proof  in two steps.

\noindent
{\bf Step 1}.  Here we prove that at each time period from $t_i$ to $t_{i+1}$ evasion is possible. Assume that at time $t_i$ Pursuer position $P_{t_i}$ does not belong to the ball $B(E_{t_{i+1}}, |E_{t_i}E_{t_{i+1}} |)$, and that Evader moved towards $E_{t_{i+1}}$.
This implies that  $|P_{t_i}E_{t_{i+1}}|> |E_{t_i}E_{t_{i+1}}|$. To arrive to a contradiction, let $\tau$ be a time such that $E_{\tau}=P_{\tau}$ and $t_i\leq \tau \leq t_{i+1}$.  From time $\tau$ on Pursuer can move together with Evader at velocity $1$ towards position $E_{t_{i+1}}$. Hence, $|P_{t_i}E_{t_{i+1}}|\leq |E_{t_i}E_{t_{i+1}}|$;  this is a contradiction as
$P_{t_i}$ does not belong to  the ball $B(E_{t_{i+1}}, |E_{t_i}E_{t_{i+1}} |)$.

\noindent
{\bf Step 2}. Here we prove that evasion is possible. Let $T_i$ be  time taken by Evader to arrive at positions $E_{t_{i+1}}$  from position $E_{t_i}$.  It is clear that $T_{i} =|E_{t_i}E_{t_{i+1}}|$ since the velocities equal to $1$. We estimate the time $T_i$ as follows:

\begin{equation} \label{1}
T_{i} =  \sqrt{\left(R - \frac{r}{i+2} \right)^{2} - \left(R - \frac{r}{i+1} \right)^{2}} \ge \frac{r}{i+2}.
\end{equation}
Hence
\[T_{1} + T_{2} + \ldots \ge \frac{r}{2} +\frac{r}{3} +\frac{r}{4} +\ldots = r\cdot \sum _{n=2}^{\infty }\frac{1}{n}  =\infty ,\]

\noindent since the series $\sum _{n = 2}^{\infty }(1/n)$ is a divergent series. Thus, this strategy is  winning for Evader.
\end{proof}

\subsection{Making the strategy smooth}
The non-deterministic computable piecewise-blind strategy we construct in Theorem \ref{Thm:Algorithm} ensures that we can compute the position of Evader at any time from the given parameters (including the strategy Pursuer is using). However, Evaders velocity is not even continuous, as she is abruptly changing direction at the decision points. This defect is easily remedied, and in fact, we can make the trajectories computably smooth, meaning that all derivatives of the position of Evader are not only well-defined, but computable\footnote{It is well-known that in general a function $f : \mathbb{R} \to \mathbb{R}$ can be computable and differentiable, but can have $f'$ computable the Halting problem.}.

\begin{lemma}
If Evader has a piecewise-blind winning strategy for any given initial condition such that the chosen time delay depends only on the current distance of Evader to the boundary, then Evader has a smooth winning strategy which can be computed from the piecewise-blind strategy.
\begin{proof}
We turn the given strategy into a smooth strategy by iteratively removing the sharp turns taken at the decision times. Assume that the original winning strategy prescribes moving to some point $x$ as the next step taking up time $t$. As the strategy is winning we know that Pursuer cannot reach $x$ within time $t$ from her current position. By compactness, there is some closest distance $d$ to $x$ she can realize. Now any alternative trajectory Evader could take that is never more than $\frac{d}{2}$ away from the prescribed linear movement is also safe up to time $t$. Whatever Evaders current momentum is, he can extend his past trajectory in a smooth way staying $\frac{d}{2}$-close to the prescribed movement up to time $t$, and safely reaching a point $x'$ with the same radial distance as $x$.

We then repeat the very same argument again and again. As the time delays of the original strategy depend only on the radial distance, the sequence of time delays we are obtaining in this way are also consistent with the original winning strategy. This in particular implies that there sum diverges. As the resulting smooth strategy is safe at any time during its construction, it is also a winning strategy.
\end{proof}
\end{lemma}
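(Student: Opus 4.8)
The plan is to start from the piecewise-blind winning strategy and smooth it out decision-point by decision-point, carefully preserving enough slack that safety is never lost and that the sequence of radial distances — hence the sequence of time delays — remains exactly the one prescribed by the original strategy. Concretely, I would reason about the trajectory inductively: suppose we have already built a smooth trajectory up to the $i$-th decision time $t_i$, with Evader at a point $E_{t_i}$ having the correct radial distance $R - r/(i+1)$, and with some current velocity vector (momentum) $\dot E_{t_i}$ of speed at most $1$. The original strategy prescribes a straight-line move to a point $x = E_{t_{i+1}}$ of radial distance $R - r/(i+2)$, taking time $t = T_i = |E_{t_i}x|$, with the guarantee that Pursuer cannot reach $x$ by time $t_i + T_i$.

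The first key step is to quantify the safety margin. Since Pursuer moves with speed at most $1$ inside the compact disk, the set of points Pursuer can occupy at time $t_i + T_i$ is a compact set not containing $x$; let $d > 0$ be its distance to $x$. (One should note this $d$ depends computably on the data — the current positions and the Pursuer strategy under consideration — which is fine since we only need existence of the smooth Evader trajectory, and the construction is uniform in the piecewise-blind strategy.) Then any curve from $E_{t_i}$ that stays within $\frac{d}{2}$ of the prescribed segment, has speed at most $1$, and is parametrised so as to arrive by time $t_i + T_i$, keeps Evader strictly away from Pursuer throughout $[t_i, t_i + T_i]$, because at every such time $s$ the prescribed linear point is at distance more than $\frac{d}{2}$... — more precisely, one uses that Pursuer's reachable set at time $s$ is within $s - t_i$ of $P_{t_i}$, and compares with the same estimate used in Step~1 of Theorem~\ref{Thm:Algorithm}; the $\frac{d}{2}$-tube is enough to absorb the perturbation. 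The second key step is the smoothing itself: using a smooth transition/bump function of the type already built in Lemma~\ref{lemma:aux:function}, one interpolates between the incoming momentum $\dot E_{t_i}$ and the segment direction, then later between the segment direction and the chosen incoming momentum for the next leg, all while staying inside the $\frac{d}{2}$-tube and never exceeding speed $1$; this is possible because we have a full time interval $T_i \ge r/(i+2) > 0$ to perform the blend and because the tube has nonempty interior. We arrange that the curve ends at a point $x'$ with exactly the same radial distance $R - r/(i+2)$ as $x$ (the radial distance is the only feature the original strategy's time-delay rule depends on), so the inductive hypothesis is restored.

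The third step is to close the induction and read off the global conclusion. Since the time delays of the original strategy depend only on the radial distance of Evader, and each leg of the smoothed trajectory reaches a point with the same radial distance as the corresponding leg of the original, the smoothed trajectory uses exactly the same sequence of time delays $T_1, T_2, \ldots$; by the estimate $T_i \ge r/(i+2)$ from Theorem~\ref{Thm:Algorithm} their sum diverges, so the smoothed trajectory is defined for all times and Evader is never caught — hence it is a winning strategy. Computability of the smooth strategy from the piecewise-blind one follows because each smoothing step is an effective interpolation with the explicit smooth bump functions, applied uniformly.

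The main obstacle I expect is the bookkeeping at the junctions: one must smooth across a decision point while the \emph{incoming} velocity is whatever the previous leg left behind and the \emph{outgoing} direction is dictated by the (non-deterministic) choice of ball — so the blend has to be done causally, i.e.\ the end of leg $i$ must already anticipate being able to continue smoothly into leg $i+1$ regardless of which branch is taken. The clean way to handle this is to insist that at every decision time $t_i$ the smoothed trajectory arrives with velocity zero (or with a fixed radial velocity), reducing the matching problem to a single smooth start-from-rest interpolation on each leg; the cost is that each leg now needs a genuinely positive time budget to accelerate and decelerate, which is exactly what $T_i \ge r/(i+2) > 0$ provides. Verifying that speed $\le 1$ and the $\frac{d}{2}$-tube constraint can be met simultaneously under this start-and-stop regime is the one calculation that needs care, but it is routine once the tube radius $\frac{d}{2}$ and the leg length are both known to be bounded away from $0$ on each leg.
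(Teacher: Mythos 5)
Your proposal is correct and follows essentially the same route as the paper's proof: extract a positive safety margin $d$ by compactness of Pursuer's reachable set, observe that any trajectory staying within $\frac{d}{2}$ of the prescribed linear movement remains safe, smooth each leg so that it ends at a point with the same radial distance (so the original time-delay sequence, whose sum diverges, is reproduced), and iterate. Your additional care about matching momenta at the junctions (e.g.\ arriving with velocity zero) and about the speed bound is a reasonable elaboration of a detail the paper leaves implicit, not a different argument.
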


\begin{corollary}
Evader has a computable non-deterministic winning strategy for any given initial condition such that any compatible trajectory of Evader is smooth.
\end{corollary}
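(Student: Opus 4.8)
The plan is to combine the two preceding results in the obvious way: Theorem~\ref{Thm:Algorithm} supplies a computable non-deterministic piecewise-blind winning strategy for Evader, and the Lemma just above converts any such strategy (whose time delays depend only on the radial distance to the boundary) into a smooth winning strategy, computable from the piecewise-blind one. So the first step is to verify that the piecewise-blind strategy produced in Theorem~\ref{Thm:Algorithm} actually satisfies the hypothesis of the Lemma, namely that the chosen time delay $\Delta_i$ depends only on the current distance of Evader to the boundary. Inspecting the construction, $\Delta_i = T_i = \sqrt{(R - r/(i+2))^2 - (R - r/(i+1))^2}$, and since $i$ is determined by the current radial distance $R - r/(i+1)$, this is indeed a function of that distance alone. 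Hence the Lemma applies.

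Next I would spell out why the composite strategy is still \emph{computable} and \emph{non-deterministic}. The only non-determinism in Theorem~\ref{Thm:Algorithm} is the choice, at each decision time $t_i$, of which of the two balls $B(E_{lt_i},\cdot)$ and $B(E_{rt_i},\cdot)$ is confirmed first to exclude $P_{t_i}$; everything else is computed by a deterministic algorithm from the approximations to the positions. The smoothing Lemma does not add any further choices — it deterministically interpolates the prescribed piecewise-linear motion by a smooth curve staying within $d/2$ of it, where $d>0$ is the safety margin guaranteed by compactness at that step. Thus the composite strategy is a computable multivalued object: given the enumeration of Pursuer's positions, it outputs a smooth trajectory, and every such output is winning. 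I would then note that any trajectory compatible with this strategy is smooth by construction of the smoothing step, and winning because the smoothing step preserves safety at every finite stage and preserves the divergence $\sum_i T_i = \infty$ (which is exactly where the hypothesis ``$\Delta_i$ depends only on radial distance'' is used — it guarantees the new time delays coincide with those of a run of the original winning strategy).

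The main obstacle, such as it is, is bookkeeping rather than mathematics: one must be careful that ``computable from the piecewise-blind strategy'' is understood \emph{uniformly}, i.e.\ that a single oracle Turing machine, fed an enumeration of Pursuer's positions, produces (fast-converging approximations to) the smooth Evader trajectory together with all its derivatives. This requires that the smoothing interpolation can be carried out effectively and uniformly across the infinitely many decision steps — one fixes in advance a smooth bump-function-based template for the interpolation (as in Lemma~\ref{lemma:aux:function}) and rescales it by the locally computable margin $d/2$, so that the resulting function and all its derivatives are computed uniformly in $x$ and the derivative order $k$. Since $d>0$ at each step and the steps are traversed in increasing time, no obstruction to uniformity arises. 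Assembling these observations gives the corollary.

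\begin{proof}
By Theorem~\ref{Thm:Algorithm}, Evader has a computable non-deterministic piecewise-blind winning strategy that works for any given initial position. In that strategy the time delay at the $i$-th decision point equals $T_i = \sqrt{(R - r/(i+2))^2 - (R - r/(i+1))^2}$, and the index $i$ is determined by the current radial distance $R - r/(i+1)$ of Evader to the boundary; hence the chosen time delay depends only on the current distance of Evader to the boundary. Therefore the hypothesis of the preceding Lemma is met, and Evader has a smooth winning strategy computable from the piecewise-blind one. The only non-determinism, namely which of the two excluding balls is confirmed first, is inherited unchanged; the smoothing procedure is deterministic, interpolating the prescribed linear motion by a smooth curve staying within $d/2$ of it, where $d>0$ is the (locally computable) safety margin of that step. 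Feeding the resulting machine an enumeration of Pursuer's positions yields, uniformly, a smooth Evader trajectory together with all its derivatives, and every trajectory compatible with the strategy is smooth by construction and winning because safety is preserved at each finite stage and, the new time delays being equal to those of a run of the original winning strategy, their sum still diverges.
\end{proof}
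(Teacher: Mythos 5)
Your proof is correct and takes essentially the same route the paper intends: the corollary is stated without proof precisely because it is the immediate combination of Theorem~\ref{Thm:Algorithm} with the smoothing Lemma, and your verification that the Besicovitch-style time delays $T_i$ depend only on the current radial distance (so the Lemma's hypothesis is met) is exactly the one detail that needs checking. The additional remarks on uniformity of the smoothing are sensible but not required beyond what the Lemma already asserts.
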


\subsection{Evasion in a disk: can a winning strategy be deterministic?}
We proved that Evader has a non-deterministic computable winning strategy in pursuit and evasion game that occurs in a disk. The following question is immediate:
\begin{question}
\label{que:deterministic}
Does Evader have a computable deterministic winning strategy?
\end{question}

The answer to this question has eluded us.  A priori, a natural approach would be to adapt Besicovitch' strategy further by replacing our non-deterministic case distinction of whether Evader should move clockwise or counter-clockwise for the next time period by a region of continuous interpolation. While we do not formulate a formal impossibility result, we shall nevertheless explain the problem encountered on this undertaking.

Assume that Pursuer is closer to the center of the disk than Evader, and that Pursuer is inside the region where we continuously interpolate between the two options suggested by Besicovitch' strategy. The continuous interpolation affects both the direction in which we move, and the time delay until the next choice. We fix the current distance between Pursuer and Evader, and consider how these vary by the precise position of Pursuer on the relevant circle. Since a time delay of $0$ does not yield a well-defined strategy, by compactness we have a minimum positive time delay.

If this minimum time delay is small, we face a problem with the requirement that the sum of all time delays must diverge to infinity for the strategy being well-defined, not matter how Evader acts. We could attempt to allow for very small time delays by moving around the region of continuous interpolation in such a manner that if Pursuer actually is in a position causing a very small time delay at one step, he could not also be at a region of very small time delay at the subsequent step (since he has not much time to get to the new region). How far apart we can keep these regions of interpolation depends on how close Pursuer is to Evader -- and Besicovitch' strategy allows Pursuer to catch up too rapidly to provide us with the required flexibility.

Consider now the case that the minimum time delay is large. There is a position of Pursuer that causes Evader to move directly towards the boundary of the disk. Since the distance of Evader to the boundary of the disk is the resource Evader is spending in exchange for time in Besicovitch' strategy, this must not happen too often. Together with the preceding paragraph, this seems to leave us no room for the intended continuous interpolation.

To rule out a computable deterministic winning strategy, the analysis presented above is insufficient, and new arguments would need to be found. On the other hand, it might be that a different approach than Besicovitch' strategy more readily yields a computable deterministic strategy. Engaging further with the question should shed new light on this classic game.

\bibliography{../spieltheorie}

\begin{thebibliography}{10}

\bibitem{adlerb}
Micah Adler, Harald R\"{a}cke, Naveen Sivadasan, Christian Sohler, and Berthold
  V\"{o}cking.
\newblock Randomized pursuit-evasion in graphs.
\newblock In {\em Proceedings of the 29th International Colloquium on Automata,
  Languages and Programming}, ICALP '02, pages 901--912. Springer, 2002.
\newblock URL: \url{http://dl.acm.org/citation.cfm?id=646255.758840}.

\bibitem{adler}
Micah Adler, Harald R\"acke, Naveen Sivadasana, Christian Sohler, and Berthold
  V\"ocking.
\newblock Randomized pursuit-evasion in graphs.
\newblock {\em Combinatorics, Probability \& Computing}, 12(3):225--244, 2003.
\newblock \href {http://dx.doi.org/10.1017/S0963548303005625}
  {\path{doi:10.1017/S0963548303005625}}.

\bibitem{alexander}
S.~Alexander, R.~Bishop, and Robert Ghrist.
\newblock Capture pursuit games on unbounded domain.
\newblock {\em L\"enseignement Math\"ematique}, 55(1-2):103--125, 2009.
\newblock \href {http://dx.doi.org/10.4171/LEM/55-1-5}
  {\path{doi:10.4171/LEM/55-1-5}}.

\bibitem{alonso}
Laurent Alonso, Arthur~S. Goldstein, and Edward~M. Reingold.
\newblock ``{L}ion and {M}an'': {U}pper and lower bounds.
\newblock {\em ORSA Journal on Computing}, 4(4), 1992.
\newblock \href {http://dx.doi.org/10.1287/ijoc.4.4.447}
  {\path{doi:10.1287/ijoc.4.4.447}}.

\bibitem{aubin}
Jean-Pierre Aubin.
\newblock {\em Optima and equilibria}.
\newblock Springer, 2nd edition, 1998.
\newblock \href {http://dx.doi.org/10.1007/978-3-662-03539-9}
  {\path{doi:10.1007/978-3-662-03539-9}}.

\bibitem{basar}
Tamer Ba\c{s}ar and Geert~Jan Olsder.
\newblock {\em Dynamic Noncooperative Game Theory}.
\newblock Society for Industrial and Applied Mathematics, 2nd edition, 1998.
\newblock \href {http://dx.doi.org/10.1137/1.9781611971132}
  {\path{doi:10.1137/1.9781611971132}}.

\bibitem{pauly-handbook}
Vasco Brattka, Guido Gherardi, and Arno Pauly.
\newblock Weihrauch complexity in computable analysis.
\newblock arXiv 1707.03202, 2017.

\bibitem{croft}
H.~T. Croft.
\newblock \enquote{{L}ion and {M}an} : A postscript.
\newblock {\em Journal of the London Mathematical Society}, s1-39(1):385--390,
  1964.
\newblock \href {http://dx.doi.org/10.1112/jlms/s1-39.1.385}
  {\path{doi:10.1112/jlms/s1-39.1.385}}.

\bibitem{denef}
J.~Denef and L.~Lipshitz.
\newblock Decision problems for differential equations.
\newblock {\em Journal of Symbolic Logic}, 54(3):941--950, 09 1989.
\newblock \href {http://dx.doi.org/10.2307/2274755}
  {\path{doi:10.2307/2274755}}.

\bibitem{dockner}
Engelbert Dockner, Steffen Jorgensen, Ngo Long, and Gerhard Sorger.
\newblock {\em Differential Games in Economics and Management Science}.
\newblock Cambridge University Press, 2000.
\newblock \href {http://dx.doi.org/10.1017/CBO9780511805127}
  {\path{doi:10.1017/CBO9780511805127}}.

\bibitem{flynn}
James Flynn.
\newblock Lion and man: The boundary constraint.
\newblock {\em SIAM Journal on Control}, 11(3):397--411, 1973.
\newblock \href {http://dx.doi.org/10.1137/0311032}
  {\path{doi:10.1137/0311032}}.

\bibitem{fomin}
Fedor~V. Fomin, Petr~A. Golovach, Jan Kratochv{\'i}l, Nicolas Nisse, and Karol
  Suchan.
\newblock Pursuing a fast robber on a graph.
\newblock {\em Theoretical Computer Science}, 411(7):1167 -- 1181, 2010.
\newblock \href {http://dx.doi.org/10.1016/j.tcs.2009.12.010}
  {\path{doi:10.1016/j.tcs.2009.12.010}}.

\bibitem{afriedman}
Avner Friedman.
\newblock {\em Differential Games}.
\newblock Wiley, New York, 1971.

\bibitem{hainry}
Emmanuel Hainry.
\newblock Reachability in linear dynamical systems.
\newblock In Arnold Beckmann, Costas Dimitracopoulos, and Benedikt L{\"o}we,
  editors, {\em CiE 2008: Logic and Theory of Algorithms}, pages 241--250.
  Springer, 2008.
\newblock \href {http://dx.doi.org/10.1007/978-3-540-69407-6\_28}
  {\path{doi:10.1007/978-3-540-69407-6\_28}}.

\bibitem{isaacs}
R.~Isaacs.
\newblock {\em Differential games}.
\newblock John Wiley and Sons, New York, 1965.

\bibitem{ivanov}
R.~P. Ivanov.
\newblock Simple pursuit-evasion on a compact set.
\newblock {\em Doklady Akademii Nauk SSSR}, 254(6):1318--1321, 1980.

\bibitem{suri}
Kyle Klein and Subhash Suri.
\newblock Catch me if you can: Pursuit and capture in polygonal environments
  with obstacles.
\newblock In {\em Proceedings of the Twenty-Sixth AAAI Conference on Artificial
  Intelligence}, AAAI'12, pages 2010--2016. AAAI Press, 2012.
\newblock URL: \url{http://dl.acm.org/citation.cfm?id=2900929.2901012}.

\bibitem{kopparty}
Swastik Kopparty and Chinya~V. Ravishankar.
\newblock A framework for pursuit evasion games in {$\mathbb{R}^n$}.
\newblock {\em Information Processing Letters}, 96(3):114 -- 122, 2005.
\newblock \href {http://dx.doi.org/10.1016/j.ipl.2005.04.012}
  {\path{doi:10.1016/j.ipl.2005.04.012}}.

\bibitem{subbotin}
N.~N. Krasovskiy and A.~I. Subbotin.
\newblock {\em Game-theoretical control problems}.
\newblock Springer, Berlin, 1988.

\bibitem{littlewood}
J.~E. Littlewood.
\newblock {\em A Mathematician's Miscellany}.
\newblock Methuen Co., London, 1957.

\bibitem{parsons}
T.~D. Parsons.
\newblock Pursuit-evasion in a graph.
\newblock In Yousef Alavi and Don~R. Lick, editors, {\em Theory and
  Applications of Graphs}, pages 426--441, Berlin, Heidelberg, 1978. Springer.
\newblock \href {http://dx.doi.org/10.1007/BFb0070400}
  {\path{doi:10.1007/BFb0070400}}.

\bibitem{parsons2}
T.~D. Parsons.
\newblock The search number of a connected graph.
\newblock In {\em Proceedings of the 10th Southeastern Conference on
  Combinatorics, Graph Theory and Computing}, pages 549--554, 1978.

\bibitem{pauly-synthetic}
Arno Pauly.
\newblock On the topological aspects of the theory of represented spaces.
\newblock {\em Computability}, 5(2):159--180, 2016.
\newblock URL: \url{http://arxiv.org/abs/1204.3763}, \href
  {http://dx.doi.org/10.3233/COM-150049} {\path{doi:10.3233/COM-150049}}.

\bibitem{petrosyan}
L.A. Petrosyan.
\newblock Survival differential game with many participants.
\newblock {\em Dokladi AN SSSR}, 161(2):285--287, 1965.

\bibitem{petrosyan2}
L.A. Petrosyan.
\newblock {\em Differential games of pursuit}.
\newblock Leningrad University Press, 1977.
\newblock in Russian.

\bibitem{pontryagin2}
L~S Pontryagin.
\newblock On the theory of differential games.
\newblock {\em Russian Mathematical Surveys}, 21(4):193--246, aug 1966.
\newblock \href {http://dx.doi.org/10.1070/rm1966v021n04abeh004171}
  {\path{doi:10.1070/rm1966v021n04abeh004171}}.

\bibitem{rado2}
Richard Rado.
\newblock How the lion tamer was saved.
\newblock {\em Math. Spectrum}, 6(1):14--18, 1973.

\bibitem{tsokos}
K.~M. Ramachandran and C.~P. Tsokos.
\newblock {\em Stochastic differential games}.
\newblock Atlantis Studies in Probability and Statistics. Springer, 2012.

\bibitem{rote}
G\"{u}nter Rote.
\newblock Pursuit-evasion with imprecise target location.
\newblock In {\em Proceedings of the Fourteenth Annual ACM-SIAM Symposium on
  Discrete Algorithms}, SODA '03, pages 747--753, Philadelphia, PA, USA, 2003.
  SIAM.
\newblock URL: \url{http://dl.acm.org/citation.cfm?id=644108.644231}.

\bibitem{satimov}
N~Satimov and A~Kuchkarov.
\newblock Deviation from encounter with several pursuers on a surface.
\newblock {\em Uzbek. Mat. Zh}, 1:51--55, 2001.

\bibitem{sgall}
Ji\v{r}\'{i} Sgall.
\newblock Solution of {D}avid {G}ale's lion and man problem.
\newblock {\em Theoretical Computer Science}, 259(1):663 -- 670, 2001.
\newblock \href {http://dx.doi.org/10.1016/S0304-3975(00)00411-4}
  {\path{doi:10.1016/S0304-3975(00)00411-4}}.

\bibitem{soare2}
Robert~I. Soare.
\newblock {\em Recursively Enumerable Sets and Degrees. A Study of Computable
  Functions and Computably Generated Sets}.
\newblock Springer, Berlin--New York, 1987.

\bibitem{weihrauchd}
Klaus Weihrauch.
\newblock {\em Computable Analysis}.
\newblock Springer-Verlag, 2000.

\bibitem{pachter}
Y.~Yavin, M.~Pachter, and Ervin Rodin, editors.
\newblock {\em Pursuit-Evasion differential games}. Pergamon Press, 1987.

\end{thebibliography}

\end{document}